\newtheorem{theorem}{Theorem}[section]
\newtheorem{lemma}[theorem]{Lemma}
\newtheorem{proposition}[theorem]{Proposition}
\newtheorem{corollary}[theorem]{Corollary}
\theoremstyle{definition}
\newtheorem{definition}[theorem]{Definition}
\newtheorem*{remark}{Remark}
\DeclareMathOperator{\Ext}{Ext}
\DeclareMathOperator{\colim}{colim}
\DeclareMathOperator{\cok}{cok}
\newcommand{\cat}[1]{\mathcal{#1}}           %% font for categories
\newcommand{\class}[1]{\mathcal{#1}}   %% font for classes
\newcommand{\mathcolon}{\colon\,} %% Hovey uses for maps, like f: A -> B
\newcommand{\qcox}{\textnormal{Qco}(X)}
\newcommand{\chqcox}{\textnormal{Ch}(\textnormal{Qco}(X))}
\newcommand{\ch}{\textnormal{Ch}(R)}
\newcommand{\cha}[1]{\textnormal{Ch}(\mathcal{#1})}
\newcommand{\tilclass}[1]{\widetilde{\class{#1}}}
\newcommand{\dwclass}[1]{dw\widetilde{\class{#1}}}
\newcommand{\exclass}[1]{ex\widetilde{\class{#1}}}
\newcommand{\dgclass}[1]{dg\widetilde{\class{#1}}}
\newcommand{\rightperp}[1]{#1^{\perp}}
\newcommand{\leftperp}[1]{{}^\perp #1}
\begin{document}

\title{Models for mock homotopy categories of projectives}

\author{James Gillespie}
\address{Ramapo College of New Jersey \\
         School of Theoretical and Applied Science \\
         505 Ramapo Valley Road \\
         Mahwah, NJ 07430}
\email[Jim Gillespie]{jgillesp@ramapo.edu}
\urladdr{http://pages.ramapo.edu/~jgillesp/}

\keywords{model structure, homotopy category, chain complexes, recollement}
\date{\today}
\subjclass[2010]{55U35, 18G25, 18E35}

\begin{abstract}%% what do you actually prove?%%
Let $R$ be a ring and $\ch$ the category of chain complexes of $R$-modules. We put an abelian model structure on $\ch$ whose homotopy category is equivalent to $K(Proj)$, the homotopy category of all complexes of projectives. However, the cofibrant objects are not complexes of projectives, but rather all complexes of flat modules. The trivial objects are what Positselski calls contraacyclic complexes and so the homotopy category coincides with his contraderived category. We in fact construct this model on the category of chain complexes of quasi-coherent sheaves on any scheme $X$ admitting a flat generator. In this case the homotopy category recovers what Murfet calls the mock homotopy category of projectives. In the same way we construct a model for the (mock) projective stable derived category, and we use model category methods to recover the recollement of Murfet. Finally, we consider generalizations by replacing the flat cotorsion pair with other complete hereditary cotorsion pairs in Grothendieck categories.   
\end{abstract}

\maketitle

\section{Introduction}\label{sec-introduction}

Let $X$ be a separated noetherian scheme and  Qco($X$) the category of quasi-coherent sheaves on $X$. In Murfet's thesis~\cite{murfet}, we saw the introduction of the \emph{mock homotopy category of projectives} $K_m(Proj X)$. While the category Qco($X$) typically doesn't have enough projectives, Murfet showed that this triangulated category $K_m(Proj X)$, which is a certain Verdier localization of the homotopy category of flat sheaves, fills the role one would expect from a homotopy category of projectives. In particular, as Neeman originally showed in~\cite{neeman-flat}, $K_m(Proj X)$ agrees with the usual homotopy category of projectives in the case that $X$ is affine. And, Murfet's thesis shows in detail how $K_m(Proj X)$ plays a fundamental role in an extension of classical Grothendieck duality.

It is an interesting question whether or not we can describe these phenomena using cotorsion pairs and abelian model categories. Several authors have recently considered this question, for example, see~\cite{enochs-model strucs, stovicek-exact model cats, gillespie-recollements2}.  In particular, {\v{S}}\v{t}ov{\'{\i}}{\v{c}}ek~\cite{stovicek-exact model cats} and Gillespie~~\cite{gillespie-recollements2} each constructed model structures on the category of complexes of flat sheaves whose homotopy category is equivalent to $K_m(Proj X)$. But the category of complexes of flat sheaves does not have all small limits and colimits and so the resulting \emph{exact model structures} do not give us a model category in the usual sense. A closer inspection shows that we would like a model structure on the entire chain complex category such that:  (1) The cofibrant objects are exactly the complexes of flat quasi-coherent sheaves, and, (2) In the affine case, its homotopy category recovers Positselski's contraderived category (from~\cite{positselski}), which in turn is equivalent to $K(Proj)$, the homotopy category of all complexes of projectives. Analogous to the usual derived category which is obtained as a Verdier quotient by killing the exact (or acyclic) complexes, the contraderived category is the Verdier quotient obtained by killiing the contraacyclic complexes. By definition, these are the complexes $W$ such that every chain map $P \xrightarrow{} W$ is null homotopic whenever $P$ is a complex of projectives.

%%The first point of the current paper is to construct such model structures. Secondly we wish to give a simple model category explanation for the fundamental recollement of Murfet. This will be provided by applying Theorem~\ref{them-left recollement}. This Theorem and its dual Theorem~\ref{them-right recollement theorem} are of interest in their own right. The generalize work of the author in~\cite{} and give general conditions that ensure a recollement situation from three abelian model structures.

%%The approach here does this briefly and makes it clear that the recollement itself holds for very general reasons. In particular, the only fact from algebraic geometry that we need is that every quasi-coherent sheaf has a flat cover. So in fact, analogous results hold when Qco($X$) is replaced by another Grothendieck category and the class of flat sheaves is replaced by another suitable class of objects.

So the first goal of this paper is to construct such a model structure, satisfying the conditions (1) and (2) above. Its homotopy category corresponds exactly to Murfet's mock homotopy category of projectives, $K_m(Proj X)$. In fact, if we restrict the model to the Quillen equivalent model inherited by the fully exact subcategory of cofibrant objects, that is, the exact category of chain complexes of flat sheaves, then the resulting exact model structure recovers the previous models constructed in~\cite{enochs-model strucs, stovicek-exact model cats, gillespie-recollements2}. 
The surprising thing is that the model structure we seek is now quite easy to construct due to a new method for constructing abelian model structures that appeared in~\cite{gillespie-hovey triples}. This method works well in situations where we do not know the exact nature of the trivial objects. For example, when looking to construct a model structure for $K_{m}(Proj X)$, we know that a complex $F$ of flat sheaves ought to be trivial whenever it is exact and has each cycle $Z_nF$ flat. But for complexes without flat components, it is not clear (for non affine schemes) what the trivial complexes should be. How can one construct a model structure when one doesn't have an explicit description of the trivial objects? The paper~\cite{gillespie-hovey triples} provides the answer we need to this question. We apply it in Section~\ref{sec-models for mock} to easily obtain, using already known completeness results for cotorsion pairs of complexes, the desired model structure for $K_{m}(Proj X)$. Using the same method we will obtain a model structure for Murfet's \emph{mock projective stable derived category}, denoted $K_{m,ac}(Proj X)$. 

But the much more ambitious goal of this paper is give a model category description of the recollement from Murfet's thesis involving $K_m(Proj X)$, $K_{m,ac}(Proj X)$, and the usual derived category $\class{D}(X)$. This recollement is dual to  the injective recollement of Krause which was interpreted via model category methods by Becker in~\cite{becker}. This method was generalized in~\cite{gillespie-recollements}, and in the current paper we see in Theorems~\ref{them-right recollement} and~\ref{them-left recollement} similar, but more general results, allowing one to obtain recollement situations from abelian model structures. This method is purely (model) categorical and by stripping away all of the algebraic geometry we see that we can obtain the recollement whenever $X$ is a scheme for which Qco($X$) admits a flat generator. For example, this is true whenever $X$ is a quasi-compact and semi-separated scheme. 
In fact the method is so general that we show in the last section how to obtain a similar recollement by starting with any hereditary cotorsion pair $(\class{A},\class{B})$ in a Grothendieck category $\cat{G}$, and as long as it is cogenerated by a set $\class{S}$, and that $\class{A}$ contains a generating set $\{U_i\}$ for $\cat{G}$.

%%These two model structures give the appropriate dual analogs to recent work in~\cite{bravo} and~\cite{becker} and~\cite{bravo-gillespie-hovey}, where the homotopy categories $K(Inj)$ of injectives, as well Krause's injective stable derived category $S(R)$ from~\cite{krause-stable derived cat of a Noetherian scheme}, were lifted to the level of model structures on the entire chain complex category.

\section{Notation and preliminaries}\label{sec-notation and preliminaries}

For the convenience of the reader we summarize some essential definitions in this section. This includes cotorsion pairs and abelian model structures, recollement situations, and deconstructibility.
Throughout this section and Section~\ref{sec-recollement} we let $\cat{A}$ denote a fixed abelian category. 

\subsection{Cotorsion pairs and abelian model structures}\label{subsec-abelian model cats}

Hovey showed in~\cite{hovey} that an abelian model category $\cat{A}$ is nothing more than two nicely related cotorsion pairs in $\cat{A}$.  By definition, a pair of classes $(\class{X},\class{Y})$ in $\cat{A}$ is called a \emph{cotorsion pair} if $\class{Y} = \rightperp{\class{X}}$ and $\class{X} = \leftperp{\class{Y}}$. Here, given a class of objects $\class{C}$ in $\cat{A}$, the right orthogonal  $\rightperp{\class{C}}$ is defined to be the class of all objects $X$ such that $\Ext^1_{\cat{A}}(C,X) = 0$ for all $C \in \class{C}$. Similarly, we define the left orthogonal $\leftperp{\class{C}}$. We call the cotorsion pair \emph{hereditary} if $\Ext^i_{\cat{A}}(X,Y) = 0$ for all $X \in \class{X}$, $Y \in \class{Y}$, and $i \geq 1$. The cotorsion pair is \emph{complete} if it has enough injectives and enough projectives. This means that for each $A \in \cat{A}$ there exist short exact sequences $0 \xrightarrow{} A \xrightarrow{} Y \xrightarrow{} X \xrightarrow{} 0$ and $0 \xrightarrow{} Y' \xrightarrow{} X' \xrightarrow{} A \xrightarrow{} 0$ with $X,X' \in \class{X}$ and $Y,Y' \in \class{Y}$. 
Besides their connection to abelian model structures which we describe next, cotorsion pairs are fundamental in modern homological algebra. There are several good references. In particular we will refer to~\cite{enochs-jenda-book} and~\cite{hovey}.

The main theorem of~\cite{hovey} showed that an abelian model structure on $\cat{A}$ is equivalent to a triple $(\class{C},\class{W},\class{F})$ of classes of objects in $\cat{A}$ for which $\class{W}$ is thick and $(\class{C} \cap \class{W},\class{F})$ and $(\class{C},\class{W} \cap \class{F})$ are each complete cotorsion pairs. By \emph{thick} we mean that the class $\class{W}$ is closed under retracts and satisfies that whenever two out of three terms in a short exact sequence are in $\class{W}$ then so is the third. In this case, $\class{C}$ is precisely the class of cofibrant objects of the model structure, $\class{F}$ are precisely the fibrant objects, and $\class{W}$ is the class of trivial objects. We hence denote an abelian model structure $\class{M}$ as a triple $\class{M} = (\class{C},\class{W},\class{F})$ and for short 
we will denote the two associated cotorsion pairs above by $(\tilclass{C},\class{F})$ and $(\class{C},\tilclass{F})$. We say that $\class{M}$ is \emph{hereditary} if both of these associated cotorsion pairs are hereditary. We will also call any abelian model structure $\class{M} = (\class{C},\class{W},\class{F})$ a \emph{Hovey triple}.

Note that given any Hovey triple $\class{M} = (\class{C},\class{W},\class{F})$ there are four approximation sequences associated to $\class{M}$. In particular, using enough injectives of $(\tilclass{C},\class{F})$ corresponds to the fibrant replacement functor denoted by $R$. On the other hand, using enough projectives of $(\class{C},\tilclass{F})$ corresponds to the cofibrant replacement functor which we denote by $Q$. However, by using enough projectives of $(\tilclass{C},\class{F})$ we also get a functor which we denote by $\widetilde{Q}$ and by using enough injectives of $(\class{C},\tilclass{F})$ we get a functor we denote by $\widetilde{R}$. When we encounter multiple abelian model structures we use subscripts such as $\class{M}_1,\class{M}_2,\class{M}_3$ and denote these associated functors with notations such as $R_2$, $\widetilde{Q}_1$, $\widetilde{R}_3$ etc.

Finally, by the \emph{core} of an abelian model structure $\class{M} = (\class{C},\class{W},\class{F})$ we mean the class $\class{C} \cap \class{W} \cap \class{F}$. A recent result appearing in~\cite{gillespie-hovey triples} will prove fundamental to this paper. It says that whenever $(\tilclass{C},\class{F})$ and $(\class{C},\tilclass{F})$ are complete hereditary cotorsion pairs with equal cores and $\tilclass{F} \subseteq \class{F}$, then there is a unique thick class $\class{W}$ yielding a Hovey triple $\class{M} = (\class{C},\class{W},\class{F})$ with $\class{C} \cap \class{W} = \tilclass{C}$ and $\class{W} \cap \class{F} = \tilclass{F}$. Besides~\cite{hovey} we will refer to~\cite{hovey-model-categories} for any other basics from the theory of model categories. 

\subsection{Recollement situations}

The homotopy category of an abelian model category is always a triangulated category~\cite[Section~7]{hovey-model-categories}. Quillen functors, which are left adjoint functors between abelian model categories preserving the cofibrant and trivially cofibrant objects, naturally give rise to \emph{left derived functors} between these triangulated homotopy categories. This paper aims to describe recollements of triangulated categories using Quillen functors. Loosely, a recollement is an ``attachment'' of two triangulated categories. The standard reference is~\cite{BBD-perverse sheaves}, although the definitions below will surely suffice for the reader.

\begin{definition}\label{def-localization sequence}
Let $\class{T}' \xrightarrow{F} \class{T} \xrightarrow{G} \class{T}''$ be a sequence of exact functors between triangulated categories. We say it is a \emph{localization sequence} when there exists right adjoints $F_{\rho}$ and $G_{\rho}$ giving a diagram of functors as below with the listed properties.
$$\begin{tikzcd}
\class{T}'
\rar[to-,
to path={
([yshift=0.5ex]\tikztotarget.west) --
([yshift=0.5ex]\tikztostart.east) \tikztonodes}][swap]{F}
\rar[to-,
to path={
([yshift=-0.5ex]\tikztostart.east) --
([yshift=-0.5ex]\tikztotarget.west) \tikztonodes}][swap]{F_{\rho}}
& \class{T}
\rar[to-,
to path={
([yshift=0.5ex]\tikztotarget.west) --
([yshift=0.5ex]\tikztostart.east) \tikztonodes}][swap]{G}
\rar[to-,
to path={
([yshift=-0.5ex]\tikztostart.east) --
([yshift=-0.5ex]\tikztotarget.west) \tikztonodes}][swap]{G_{\rho}}
& \class{T}'' \\
\end{tikzcd}$$
\begin{enumerate}
\item The right adjoint $F_{\rho}$ of $F$ satisfies $F_{\rho} \circ F \cong \text{id}_{\class{T}'}$.
\item The right adjoint $G_{\rho}$ of $G$ satisfies $G \circ G_{\rho} \cong \text{id}_{\class{T}''}$.
\item For any object $X \in \class{T}$, we have $GX = 0$ iff $X \cong FX'$ for some $X' \in \class{T}'$.
\end{enumerate}
The notion of a \emph{colocalization sequence} is the dual. That is, there must exist left adjoints $F_{\lambda}$ and $G_{\lambda}$ with the analogous properties.
\end{definition}

Note the similarity in the definitions above to the notion of a split exact sequence, but for adjunctions. It is true that if  $\class{T}' \xrightarrow{F} \class{T} \xrightarrow{G} \class{T}''$ is a localization sequence then  $\class{T}'' \xrightarrow{G_{\rho}} \class{T} \xrightarrow{F_{\rho}} \class{T}'$ is a colocalization sequence and if  $\class{T}' \xrightarrow{F} \class{T} \xrightarrow{G} \class{T}''$ is a colocalization sequence then  $\class{T}'' \xrightarrow{G_{\lambda}} \class{T} \xrightarrow{F_{\lambda}} \class{T}'$ is a localization sequence. This brings us to the definition of a recollement where the sequence of functors  $\class{T}' \xrightarrow{F} \class{T} \xrightarrow{G} \class{T}''$ is both a localization sequence and a colocalization sequence.

\begin{definition}\label{def-recollement}
Let $\class{T}' \xrightarrow{F} \class{T} \xrightarrow{G} \class{T}''$ be a sequence of exact functors between triangulated categories. We say $\class{T}' \xrightarrow{F} \class{T} \xrightarrow{G} \class{T}''$ induces a \emph{recollement} if it is both a localization sequence and a colocalization sequence as shown in the picture.
\[
\xy
(-20,0)*+{\class{T}'};
(0,0)*+{\class{T}};
(20,0)*+{\class{T}''};
{(-18,0) \ar^{F} (-2,0)};
{(-2,0) \ar@/^1pc/@<0.5em>^{F_{\rho}} (-18,0)};
{(-2,0) \ar@/_1pc/@<-0.5em>_{F_{\lambda}} (-18,0)};
{(2,0) \ar^{G} (18,0)};
{(18,0) \ar@/^1pc/@<0.5em>^{G_{\rho}} (2,0)};
{(18,0) \ar@/_1pc/@<-0.5em>_{G_{\lambda}} (2,0)};
\endxy
\]
\end{definition}
So the idea is that a recollement is a colocalization sequence ``glued'' with a localization sequence.

%%%%%%%%%%%%%%%%%%%%%%%%%%%%%%%%%%%%%%%%%%%%%%
\subsection{Deconstructibility}\label{subsec-deconstruct}

In applications the abelian categories we work with are actually Grothendieck categories and in the final Section~\ref{subsec-generalizations} we will use the notion of deconstructibility in Grothendieck categories. This concept has been nicely developed by {\v{S}}\v{t}ov{\'{\i}}{\v{c}}ek in~\cite{stovicek}. 

\begin{definition}\label{def-deconstructible}
A class of objects $\class{F}$ is called \emph{deconstructible} if $\class{F} = \textnormal{Filt-}\class{S}$ for some set $\class{S}$. Here  $\textnormal{Filt-}\class{S}$ denotes the class of all transfinite extensions of $\class{S}$. So in particular, $\class{S} \subseteq \class{F}$ and $\class{F}$ is the class of all transfinite extensions of $\class{S}$.
\end{definition}

By a \emph{transfinite extension of $\class{S}$} we mean a colimit
$\colim X_{\alpha}$, where $X \mathcolon \lambda \xrightarrow{} \cat{G}$ is a
colimit-preserving functor whose domain is an ordinal $\lambda$ and satisfying the following: $X_0 = 0$ and each map $X_{\alpha} \xrightarrow{}
X_{\alpha + 1}$ is a monomorphism with cokernel in $\class{S}$.
 
We then have the following very useful proposition. 

\begin{proposition}\label{prop-cotorsion pairs and deconstructibility}
Let $\cat{G}$ be a Grothendieck category and $\class{X}$ a class of objects. Set $\class{Y} = \rightperp{\class{X}}$. Then the following are equivalent:
\begin{enumerate}
\item $\class{X}$ is deconstructible, closed under retracts, and contains a generating set $\{U_i\}$ for $\cat{G}$.
\item $(\class{X},\class{Y})$ is a complete cotorsion pair cogenerated by some set $\class{S}$.
\item $(\class{X},\class{Y})$ is a cotorsion pair cogenerated by some set $\class{S}$ and $\class{X}$ contains a generating set $\{U_i\}$ for $\cat{G}$.
\end{enumerate}
Moreover, in case (3), every object in $\class{X}$ is a retract of a transfinite extension of objects in $\class{S} \cup \{U_i\}$. 
\end{proposition}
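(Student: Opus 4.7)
The plan is to prove the cyclic chain of implications $(1)\Rightarrow(2)\Rightarrow(3)\Rightarrow(1)$, absorbing the ``moreover'' clause into the last step. The workhorse throughout is {\v{S}}\v{t}ov{\'{\i}}{\v{c}}ek's Grothendieck-category generalization of the Eklof--Trlifaj theorem from~\cite{stovicek}: for any set $\class{S}_0$ of objects in $\cat{G}$ containing a generating set, the pair $(\leftperp{\rightperp{\class{S}_0}},\rightperp{\class{S}_0})$ is a complete cotorsion pair, and every object of its left half is a retract of a transfinite extension of objects from $\class{S}_0$. I would also use Eklof's lemma (that $\rightperp{\class{S}} = \rightperp{(\textnormal{Filt-}\class{S})}$) at several points without comment.

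For $(1)\Rightarrow(2)$, take $\class{S}_0 = \class{S} \cup \{U_i\}$; it lies inside $\class{X}$ and contains the prescribed generating set. Eklof's lemma applied to $\class{X} = \textnormal{Filt-}\class{S}$ yields $\rightperp{\class{S}_0} = \rightperp{\class{S}} = \rightperp{\class{X}} = \class{Y}$, so the theorem above furnishes a complete cotorsion pair $(\leftperp{\class{Y}},\class{Y})$. To identify its left half with $\class{X}$, write any $L \in \leftperp{\class{Y}}$ as a retract of a transfinite extension $F$ of $\class{S}\cup\{U_i\}$; since each $U_i$ is itself a transfinite extension of $\class{S}$, the filtrations concatenate to place $F$ in $\textnormal{Filt-}\class{S} = \class{X}$, and retract-closure of $\class{X}$ (part of hypothesis~(1)) puts $L$ in $\class{X}$. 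The implication $(2)\Rightarrow(3)$ is the easy step: completeness provides, for each $V_j$ in a generating set of $\cat{G}$, an epimorphism $X_j \twoheadrightarrow V_j$ with $X_j \in \class{X}$, and epimorphic images of a generating family still generate.

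For $(3)\Rightarrow(1)$ together with the ``moreover'' clause, take $\class{S}_0 = \class{S} \cup \{U_i\}$ again. Both $\class{S}$ and $\{U_i\}$ sit in $\class{X}$, so $\rightperp{\class{S}_0} = \class{Y}$ and hence $\leftperp{\rightperp{\class{S}_0}} = \class{X}$. Applying the {\v{S}}\v{t}ov{\'{\i}}{\v{c}}ek theorem to $\class{S}_0$ (which now meets the generating-set hypothesis) delivers the ``moreover'' statement immediately. Closure of $\class{X}$ under retracts is automatic from $\class{X} = \leftperp{\class{Y}}$ together with additivity of $\Ext^1$ in the first slot. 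The genuinely hard step---and the main obstacle---is deconstructibility of $\class{X}$: the retract statement only exhibits $\class{X}$ as the retract-closure of $\textnormal{Filt-}\class{S}_0$, while deconstructibility demands $\class{X} = \textnormal{Filt-}\class{T}$ for a single set $\class{T}$. Here I would invoke the Hill-lemma machinery of~\cite{stovicek}, which bounds the presentability rank of a retract of a transfinite extension of $\class{S}_0$ in terms of $\class{S}_0$ and a presentable generator of $\cat{G}$, and then allows one to adjoin representatives of all such retracts below that cardinality threshold to enlarge $\class{S}_0$ to a set $\class{T}$ whose transfinite extensions genuinely exhaust $\class{X}$.
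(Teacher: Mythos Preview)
Your argument is correct and essentially identical to the paper's: both apply the Eklof--Trlifaj/Saor\'{\i}n--{\v{S}}{\v{t}}ov{\'{\i}}{\v{c}}ek machinery to the set $\class{S}\cup\{U_i\}$ (the paper cites \cite[Corollary~2.14(2)]{saorin-stovicek}) and extract the ``moreover'' clause in the same way. The one place you work harder than necessary is the final deconstructibility step: the paper simply invokes \cite[Proposition~2.9(1)]{stovicek}, which is precisely the statement that the retract-closure of a deconstructible class is again deconstructible, so there is no need to sketch the Hill-lemma presentability bound yourself.
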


\begin{proof}
We first show (1) implies (2). So assume we have some set $\class{S}'$ such that $\class{X} = \textnormal{Filt-}\class{S}'$ is deconstructible. We note that by~\cite[Lemma~1.6]{stovicek} $\class{X}$ itself must be closed under transfinite extensions.
Set $\class{S} = \class{S}' \cup \{U_i\}$. Then by~\cite[Corollary~2.14~(2)]{saorin-stovicek} $\class{S}$ cogenerates a complete cotorsion pair $(\leftperp{(\rightperp{\class{S}})},\rightperp{\class{S}})$ and $\leftperp{(\rightperp{\class{S}})}$ consists precisely of retracts of transfinite extensions of objects in $\class{S}$. But since $\class{X}$ is closed under transfinite extensions, retracts, and contains $\{U_i\}$, it follows that $\leftperp{(\rightperp{\class{S}})}=\class{X}$.

For the converse, suppose $(\class{X},\class{Y})$ is a complete cotorsion pair cogenerated by a set $\class{S}$, meaning $\rightperp{\class{S}} = \class{Y}$. If $\{U_i\}$ is any generating set for $\cat{G}$ and if for each $i$ we have an epimorphism $X_i \xrightarrow{} U_i$ with $X_i \in \class{X}$, then $\{X_i\}$ too is a generating set. So since the cotorsion pair is complete we can always find a generating set $\{U_i\}$ contained in $\class{X}$. 
It is easy to see that $\class{X}$ is closed under retracts, so it is left to show it is deconstructible. We can't expect to have $\class{X} = \textnormal{Filt-}\class{S}$; we must replace $\class{S}$ with another set $\class{S}'$. We start by considering $\class{T} = \class{S} \cup \{U_i\}$. Then still $\rightperp{\class{T}} = \class{Y}$ because $\{U_i\} \subseteq \class{X}$. So now $(\leftperp{(\rightperp{\class{T}})},\rightperp{\class{T}}) = (\leftperp{(\rightperp{\class{S}})},\rightperp{\class{S}}) = (\class{X},\class{Y})$ and again by~\cite[Corollary~2.14~(2)]{saorin-stovicek} we conclude that $\class{X}$ consists precisely of retracts of objects in the deconstructible class $\textnormal{Filt-}\class{T}$. But now~\cite[Proposition~2.9~(1)]{stovicek} tells us that $\class{X} = \textnormal{Filt-}\class{S}'$ for some set $\class{S}'$. So (2) implies (1), and the above arguments also make it clear that (2) and (3) are equivalent. 
\end{proof}

%%%%%%%%%%%%%%%%%%%%%%%%%%%%%%%%%%%%%%%%%%%%%%
\section{Localization and recollement via model categories}\label{sec-recollement}

Our main goal is to prove Theorems~\ref{them-right recollement} and~\ref{them-left recollement} which automatically produce a recollement from three interrelated Hovey triples. 

\begin{proposition}\label{prop-right localization}
Let $\class{M}_1 = (\class{C},\class{W}_1,\class{F}_1)$ and $\class{M}_2 = (\class{C},\class{W}_2,\class{F}_2)$ be hereditary abelian model structures with equal cores
$\class{C} \cap \class{W}_1 \cap \class{F}_1 = \class{C} \cap \class{W}_2 \cap \class{F}_2$, and $\class{F}_2 \subseteq \class{F}_1$. Then there exists an hereditary abelian model structure $\class{M}_1/\class{M}_2 = (\widetilde{\class{C}}_2, \class{W},\class{F}_1)$, called the \textbf{right localization} of $\class{M}_1$ with respect to $\class{M}_2$. Here
\begin{align*}
   \class{W}  &= \{\, X \in \class{A} \, | \, \exists \, \text{a short exact sequence } \, X \rightarrowtail F_2 \twoheadrightarrow C_1 \, \text{ with} \, F_2 \in \class{F}_2 \, , C_1 \in \tilclass{C}_1 \,\} \\
           &= \{\, X \in \class{A} \, | \, \exists \, \text{a short exact sequence } \, F'_2 \rightarrowtail C'_1 \twoheadrightarrow X \, \text{ with} \, F'_2 \in \class{F}_2 \, , C'_1 \in \tilclass{C}_1 \,\}.
          \end{align*}
\end{proposition}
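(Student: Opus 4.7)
The plan is to invoke the two-cotorsion-pair construction theorem from~\cite{gillespie-hovey triples} that was recalled at the end of Section~\ref{subsec-abelian model cats}. For the Hovey triple $(\tilclass{C}_2, \class{W}, \class{F}_1)$ we are trying to build, the two associated cotorsion pairs are forced to be of the form $(\tilclass{C}_2 \cap \class{W}, \class{F}_1)$ and $(\tilclass{C}_2, \class{W} \cap \class{F}_1)$; the natural candidates already at our disposal are $(\tilclass{C}_1, \class{F}_1)$, coming from $\class{M}_1$, and $(\tilclass{C}_2, \class{F}_2)$, coming from $\class{M}_2$. So I would apply the two-cotorsion-pair theorem with these two pairs as input, expecting to obtain $\class{W}$ satisfying $\tilclass{C}_2 \cap \class{W} = \tilclass{C}_1$ and $\class{W} \cap \class{F}_1 = \class{F}_2$.

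Next, I would verify the hypotheses of that theorem for this pair. Both cotorsion pairs are complete hereditary because $\class{M}_1$ and $\class{M}_2$ are. The required equality of cores, $\tilclass{C}_1 \cap \class{F}_1 = \tilclass{C}_2 \cap \class{F}_2$, is exactly the assumption that $\class{M}_1$ and $\class{M}_2$ have equal cores. And the inclusion $\class{F}_2 \subseteq \class{F}_1$ supplies the $\tilclass{F} \subseteq \class{F}$ hypothesis. The theorem then delivers a unique thick $\class{W}$ producing the Hovey triple $(\tilclass{C}_2, \class{W}, \class{F}_1)$, and it is hereditary because its two associated cotorsion pairs are.

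The remaining task is to match $\class{W}$ with the two displayed classes. Both descriptions are the natural ones produced by the construction inside the proof of the two-cotorsion-pair theorem: an arbitrary $X \in \class{W}$ is embedded into some $F_2 \in \class{F}_2$ with cokernel in $\tilclass{C}_1$ via enough injectives of $(\tilclass{C}_1, \class{F}_1)$, and dually $X$ is covered by an object in $\tilclass{C}_2$ using enough projectives of $(\tilclass{C}_2, \class{F}_2)$. I would then check that each of the two displayed classes is thick and contains both $\tilclass{C}_1$ and $\class{F}_2$, so by uniqueness each must equal $\class{W}$. I expect the main obstacle to be this last verification, specifically the closure of the two candidate classes under the two-out-of-three property on short exact sequences; this should rest on the hereditary assumption (to provide the needed higher Ext vanishing) together with standard horseshoe and short-five-lemma style arguments that splice an embedding of type one with a covering of type two.
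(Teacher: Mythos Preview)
Your approach is exactly the paper's: apply \cite[Theorem~1.1]{gillespie-hovey triples} to the pair $(\tilclass{C}_1,\class{F}_1)$ and $(\tilclass{C}_2,\class{F}_2)$ after checking the equal-core and $\class{F}_2 \subseteq \class{F}_1$ hypotheses. The extra work you outline in your last paragraph is unnecessary, since the two explicit descriptions of $\class{W}$ are already part of the statement of that theorem (not merely a by-product of its proof), so once you invoke it there is nothing left to verify.
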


\begin{proof}
We have the two complete hereditary cotorsion pairs $(\widetilde{\class{C}}_1,\class{F}_1)$ and $(\widetilde{\class{C}}_2, \class{F}_2)$ satisfying
$\class{F}_2 \subseteq \class{F}_1$ and $\widetilde{\class{C}}_1 \cap \class{F}_1 = \widetilde{\class{C}}_2 \cap \class{F}_2$. Applying~\cite[Theorem~1.1]{gillespie-hovey triples} we immediately obtain a unique thick class $\class{W}$, with the two descriptions above, making $(\widetilde{\class{C}}_2, \class{W},\class{F}_1)$ an hereditary abelian model structure.
\end{proof}

We note that the above Proposition is a generalization of a recent result of Becker from~\cite{becker}. The same is true for the next proposition. For the statement, recall from Section~\ref{subsec-abelian model cats} the notation we are using for the functors $R$, $\widetilde{R}$, $Q$, and $\widetilde{Q}$ associated to an abelian model structure $\class{M}$. 

\begin{proposition}\label{prop-colocalization sequence}
Let $\class{M}_1 = (\class{C},\class{W}_1,\class{F}_1)$ and $\class{M}_2 = (\class{C},\class{W}_2,\class{F}_2)$ be hereditary abelian model structures with
equal cores and $\class{F}_2 \subseteq \class{F}_1$. Then we have a colocalization sequence $\textnormal{Ho}(\class{M}_2)  \xrightarrow{R(\textnormal{Id}) = R_2} \textnormal{Ho}(\class{M}_1) \xrightarrow{R(\textnormal{Id}) = R_1} \textnormal{Ho}(\class{M}_1/\class{M}_2)$ with left adjoints  $Q_1$ and $\widetilde{Q}_2$ as shown below. Here the vertical arrows are the standard equivalences passing between the homotopy categories and the equivalent homotopy categories of fibrant-cofibrant objects.
\[
\begin{tikzpicture}[node distance=3.5 cm, auto]
\node (A)  {$\textnormal{Ho}(\class{M}_2)$};
\node (D) [below of=A] {$(\class{C} \cap \class{F}_2)/\sim$};
\node (B) [right of=A] {$\textnormal{Ho}(\class{M}_1)$};
\node (C) [right of=B] {$\textnormal{Ho}(\class{M}_1/\class{M}_2)$};
\node (E) [right of=D] {$(\class{C} \cap \class{F}_1)/\sim$};
\node (F) [right of=E] {$(\widetilde{\class{C}}_2 \cap \class{F}_1)/\sim$};

%
% Horizontal arrows
%
\draw[<-] (A.10) to node {$Q_1$} (B.170);
\draw[->] (A.350) to node [swap] {$R_2$} (B.190);
\draw[<-] (D.7) to node {$R_2$} (E.173);
\draw[->] (D.353) to node [swap] {\textnormal{inc}} (E.187);
\draw[<-] (E.7) to node {\textnormal{inc}} (F.173);
\draw[->] (E.353) to node [swap] {$\widetilde{Q}_2$} (F.187);
\draw[<-] (B.10) to node {$\widetilde{Q}_2$} (C.173);
\draw[->] (B.350) to node [swap] {$R_1$} (C.187);
%
% Vertical Arrows
%
\draw[<-] (B.290) to node {\textnormal{inc}} (E.70);
\draw[->] (B.250) to node [swap] {$Q_1 \circ R_1$} (E.110);
\draw[<-] (A.290) to node {\textnormal{inc}} (D.70);
\draw[->] (A.250) to node [swap] {$Q_2 \circ R_2$} (D.110);
\draw[<-] (C.290) to node {\textnormal{inc}} (F.70);
\draw[->] (C.250) to node [swap] {$\widetilde{Q}_2 \circ R_1$} (F.110);
\end{tikzpicture}
\]
\end{proposition}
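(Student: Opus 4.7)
The plan is to realize the sequence as the total derived functors of two identity-induced Quillen adjunctions and then verify the three axioms of a colocalization sequence. Taking left orthogonals of $\class{F}_2 \subseteq \class{F}_1$ gives $\tilclass{C}_1 \subseteq \tilclass{C}_2$, so $\textup{Id} \uc \class{M}_1 \to \class{M}_2$ is a left Quillen functor and its total derived adjunction produces $Q_1 \dashv R_2$ between $\textnormal{Ho}(\class{M}_1)$ and $\textnormal{Ho}(\class{M}_2)$. Likewise, unravelling the construction of $\class{M}_1/\class{M}_2 = (\tilclass{C}_2, \class{W}, \class{F}_1)$ from~\cite{gillespie-hovey triples} identifies its trivially cofibrant class as $\tilclass{C}_1$ and its trivially fibrant class as $\class{F}_2$, so $\textup{Id} \uc \class{M}_1/\class{M}_2 \to \class{M}_1$ is also left Quillen (cofibrants $\tilclass{C}_2 \subseteq \class{C}$, and the trivially cofibrant classes coincide), yielding the derived adjunction $\widetilde{Q}_2 \dashv R_1$.

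Next, I verify $Q_1 R_2 \cong \textup{id}$ and $R_1 \widetilde{Q}_2 \cong \textup{id}$ by descending to the equivalent categories of fibrant-cofibrant objects displayed in the diagram. Because $\class{F}_2 \subseteq \class{F}_1$, the functor $R_2$ at the bottom is the inclusion $(\class{C} \cap \class{F}_2)/\sim \hookrightarrow (\class{C} \cap \class{F}_1)/\sim$ while $Q_1$ is realized by $\class{M}_2$-fibrant replacement; on $Y \in \class{C} \cap \class{F}_2$ the composite is $R_2 Y \simeq Y$. The analogous calculation with $\widetilde{Q}_2$ in place of $R_2$, noting $\widetilde{Q}_2 Y \simeq Y$ for $Y \in \tilclass{C}_2 \cap \class{F}_1$, handles the second composition.

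For the kernel condition, I fix a cof-fib representative $X \in \class{C} \cap \class{F}_1$; then $R_1 X \cong X$ in $\textnormal{Ho}(\class{M}_1/\class{M}_2)$, so $R_1 X \cong 0$ iff $X \in \class{W}$. If $X \in \class{W}$, the second description of $\class{W}$ in Proposition~\ref{prop-right localization} supplies a short exact sequence $0 \to X \to F_2 \to C_1 \to 0$ with $F_2 \in \class{F}_2$ and $C_1 \in \tilclass{C}_1$; extension closure of $\class{C}$ puts $F_2$ in $\class{C} \cap \class{F}_2$, and $X \rightarrowtail F_2$ is a trivial cofibration of $\class{M}_1$, giving $X \cong R_2(F_2)$ in $\textnormal{Ho}(\class{M}_1)$. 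Conversely, if $X \cong R_2 X'$ with $X' \in \class{C} \cap \class{F}_2$, then the first description of $\class{W}$ (take $F_2 = X'$, $C_1 = 0$) places $X' \in \class{W}$. To transfer $\class{W}$-membership to $X$, I factor any $\class{M}_1$-weak equivalence between cof-fib objects as a trivial cofibration (cokernel in $\tilclass{C}_1 \subseteq \class{W}$) followed by a trivial fibration between cofibrants whose kernel lies in the core $\class{C} \cap \tilclass{F}_1 = \class{C} \cap \tilclass{F}_2 \subseteq \class{F}_2 \subseteq \class{W}$ (using hereditariness of $(\class{C}, \tilclass{F}_1)$ for closure of $\class{C}$ under kernels of epimorphisms); thickness of $\class{W}$ then finishes the argument.

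The main obstacle is precisely this last transfer step. Since $\class{W}$ is the trivial class of $\class{M}_1/\class{M}_2$ rather than of $\class{M}_1$, the invariance of $\class{W}$-membership under $\class{M}_1$-weak equivalences between cof-fib objects is not automatic; the key enabling fact is that the common core $\omega = \class{C} \cap \tilclass{F}_1 = \class{C} \cap \tilclass{F}_2$ lies in $\class{F}_2$ and hence in $\class{W}$ via the first description of $\class{W}$, after which thickness of $\class{W}$ carries the invariance through.
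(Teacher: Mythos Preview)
Your argument is correct, but for condition~(3) it takes a noticeably different and longer route than the paper. The paper does \emph{not} restrict to bifibrant representatives; instead it proves directly that $\ker R_1 = \class{W}$ as a subclass of all of $\cat{A}$, by computing $R_1X$ via an approximation sequence $0 \to X \to F_1 \to C_1 \to 0$ and using $\class{W} \cap \class{F}_1 = \class{F}_2$. Then for any $W \in \class{W}$ the very same short exact sequence $0 \to W \to F_2 \to C_1 \to 0$ (with $C_1 \in \tilclass{C}_1 \subseteq \class{W}_1$) already exhibits $W \cong F_2 = R_2(F_2)$ in $\textnormal{Ho}(\class{M}_1)$. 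So the ``transfer obstacle'' you identify simply never arises in the paper's proof. Your workaround---factoring an $\class{M}_1$-weak equivalence between bifibrant objects and observing via hereditariness that the kernel of the trivial fibration lands in the common core, hence in $\class{F}_2 \subseteq \class{W}$---is correct and is a nice observation in its own right, but it is extra work that the paper's more direct argument avoids. For parts~(1) and~(2) your passage to the bottom row of the diagram is essentially a repackaging of the paper's explicit approximation-sequence computation (the paper uses $\class{W}_1 \subseteq \class{W}_2$, which is what makes your inclusion functors well-defined modulo $\sim$). One small slip: when you invoke a short exact sequence $0 \to X \to F_2 \to C_1 \to 0$ you are using the \emph{first} description of $\class{W}$ in Proposition~\ref{prop-right localization}, not the second.
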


Above we followed a convention that we will use throughout the paper whenever we write functor diagrams: No matter which direction functors are going, we will always write left adjoints on the top or the left, while the right adjoints will be written on the bottom or right.

\begin{proof}
The identity functors $\class{M}_2  \xrightarrow{\textnormal{Id}} \class{M}_1 \xrightarrow{\textnormal{Id}} \class{M}_1/\class{M}_2$ are right Quillen since they preserve fibrant and trivially fibrant objects. So we get the adjunctions as shown below, where using the definitions of left and right derived functors, we have computed $(L(\textnormal{Id}), R(\textnormal{Id})) = (Q_1,R_2)$ and $(L(\textnormal{Id}), R(\textnormal{Id})) = (\widetilde{Q}_2,R_1)$.
\[
\begin{tikzpicture}[node distance=3.5 cm, auto]
\node (A)  {$\textnormal{Ho}(\class{M}_2)$};
\node (B) [right of=A] {$\textnormal{Ho}(\class{M}_1)$};
\node (C) [right of=B] {$\textnormal{Ho}(\class{M}_1/\class{M}_2)$};
%
% Horizontal arrows
%
\draw[<-] (A.10) to node {$Q_1$} (B.170);
\draw[->] (A.350) to node [swap] {$R_2$} (B.190);
\draw[<-] (B.10) to node {$\widetilde{Q}_2$} (C.173);
\draw[->] (B.350) to node [swap] {$R_1$} (C.187);
\end{tikzpicture}
\]
These functors are exact in the sense that they send triangles to triangles. So to show we have a colocalization sequence, it remains to show
\begin{enumerate}
\item $Q_1 \circ R_2 \cong 1_{\textnormal{Ho}(\class{M}_2)}$.

\item $R_1 \circ \widetilde{Q}_2 \cong 1_{\textnormal{Ho}(\class{M}_1/\class{M}_2)}$.

\item The essential image of $R_2$ equals the kernel of $R_1$.
\end{enumerate}
To prove (1), let $f : A \xrightarrow{} B$ be a morphism in $\cat{A}$. The functor $R_2$ acts by $f \mapsto \hat{f}$ where $\hat{f}$ is any map making the diagram below commute. Here the rows are exact, $F_2,F'_2 \in \class{F}_2$ and $C_2,C'_2 \in  \widetilde{\class{C}}_2 = \class{C} \cap \class{W}_2$.
$$\begin{CD}
0 @>>> A       @>j_A>>  F_2 @>>>     C_2 @>>> 0\\
@. @VVfV            @VV \hat{f} V     @.  \\
0 @>>> B       @>j_B>>       F'_2  @>>>     C'_2  @>>> 0 \\
\end{CD}$$
Now applying $Q_1$ to $\hat{f}$ gives us $\bar{f}$ in the next commutative diagram. Here $F_1,F'_1 \in \widetilde{\class{F}}_1 = \class{W}_1 \cap \class{F}_1$ and $C,C' \in \class{C}$.
$$\begin{CD}
0 @>>> F_1       @>>>  C @>q_A>>     F_2 @>>> 0\\
@. @.          @V\bar{f}VV     @V\hat{f}VV  \\
0 @>>> F'_1       @>>>       C'  @>q_B>>     F'_2  @>>> 0 \\
\end{CD}$$
By the argument in the last paragraph of the proof of Lemma~\ref{lemma-quotient map} we see that $\class{W}_1 \subseteq \class{W}_2$. Thus $j_A, j_B, q_A, q_B$ are all weak equivalences in $\class{M}_2$. So, in $\textnormal{Ho}(\class{M}_2)$, we have a commutative diagram
$$\begin{CD}
 C       @>q_A>> F_2  @<j_A<<   A\\
@V\bar{f}VV     @V\hat{f}VV  @VfVV \\
 C'         @>q_B>> F'_2 @<j_B<<  B \\
\end{CD}$$
giving rise to a natural isomorphism $\{j^{-1}_A \circ q_A\} : Q_1 \circ R_2 \cong 1_{\textnormal{Ho}(\class{M}_2)}$.

A similar type of argument to the above will prove (2).

For (3), we start by claiming $\ker{R_1} = \class{W}$, where $\class{W}$ is the class of trivial objects in $\class{M}_1/\class{M}_2$; so they are precisely the zero objects in $\textnormal{Ho}(\class{M}_1/\class{M}_2)$. ($\subseteq$) Say $R_1X \in \class{W}$. This means that the fibrant replacement $R_1X = F_1$ obtained by finding a short exact sequence $0 \xrightarrow{} X \xrightarrow{} F_1 \xrightarrow{} C_1 \xrightarrow{} 0$ with $F_1 \in \class{F}_1$ and $C_1 \in \tilclass{C}_1$, actually has $F_1 \in \class{W} \cap \class{F}_1 = \class{F}_2$. So by the definition of the class $\class{W}$ in the statement of Proposition~\ref{prop-right localization}, we have $X \in \class{W}$. ($\supseteq$) Let $X \in \class{W}$. Then computing $R_1X$ with a short exact sequence as we just did, we conclude $R_1X \in \class{F}_2 = \class{W} \cap \class{F}_1$. In particular, $R_1X = 0$ in $\textnormal{Ho}(\class{M}_1/\class{M}_2)$.

 We proceed to finish the proof of (3). It is clear that the literal image of $R_2 : \textnormal{Ho}(\class{M}_2) \xrightarrow{} \textnormal{Ho}(\class{M}_1)$ is the class $\class{F}_2 = \class{W} \cap \class{F}_1$, which is contained in $\ker{R_1}$. By definition, the essential image of $R_2$ is therefore the class of all $X$ that are isomorphic, in $\textnormal{Ho}(\class{M}_1)$, to an object of $\class{F}_2$. But it is easy to see that the kernel of any additive functor is closed under isomorphic closure. So $\ker{R_1}$ must contain the essential image of $R_2$. On the other hand, if $W \in \class{W} = \ker{R_1}$, we have a short exact sequence $0 \xrightarrow{} W \xrightarrow{} F_2 \xrightarrow{} C_1 \xrightarrow{} 0$ with $F_2 \in \class{F}_2$ and $C_1 \in \tilclass{C}_1 = \tilclass{C}_2 \cap \class{W}$. So $W \xrightarrow{} F_2$ is a weak equivalence in 
$\class{M}_1/\class{M}_2$ and we are done. 
\end{proof}

For an abelian model structure $\class{M} = (\class{C},\class{W},\class{F})$ on $\cat{A}$, we let $\gamma :\cat{A} \xrightarrow{} \textnormal{Ho}(\cat{M})$ denote the canonical localization functor.

\begin{lemma}\label{lemma-quotient map}
Whenever we have two abelian model structures $\class{M} = (\class{C},\class{W},\class{F})$ and $\class{M}' = (\class{C}',\class{W}',\class{F}')$ on $\cat{A}$
with $\class{W} \subseteq \class{W}'$, then we have a canonical \textbf{\emph{quotient functor}} $\bar{\gamma} : \textnormal{Ho}(\cat{M}) \xrightarrow{} \textnormal{Ho}(\cat{M}')$ for which $\gamma' = \bar{\gamma} \circ \gamma$.

In particular, if $\class{M} = (\class{C},\class{W},\class{F})$ and $\class{M}' = (\class{C},\class{W}',\class{F}')$ have the same cofibrant objects and $\class{F}' \subseteq \class{F}$, then we have $\class{W} \subseteq \class{W}'$. So we have $\bar{\gamma} : \textnormal{Ho}(\cat{M}) \xrightarrow{} \textnormal{Ho}(\cat{M}')$.
\end{lemma}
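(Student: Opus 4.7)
The plan splits naturally along the two assertions. For the first assertion, my approach is to invoke the universal property of Gabriel--Zisman localization at the weak equivalences, so the crux is to show that every weak equivalence of $\class{M}$ is a weak equivalence of $\class{M}'$. The cleanest route is the standard characterization that a morphism $f$ is a weak equivalence in an abelian model structure $(\class{C},\class{W},\class{F})$ precisely when it factors as $f = pi$ with $i$ a monomorphism whose cokernel lies in $\class{W}$ and $p$ an epimorphism whose kernel lies in $\class{W}$ (Hovey's description of weak equivalences). Given $\class{W} \subseteq \class{W}'$, any such factorization in $\class{M}$ is also a factorization of the same kind in $\class{M}'$, so the inclusion of weak equivalence classes is immediate. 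The universal property of localization then produces a unique $\bar{\gamma} \colon \textnormal{Ho}(\class{M}) \to \textnormal{Ho}(\class{M}')$ with $\gamma' = \bar{\gamma} \circ \gamma$.

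For the second assertion I need to derive $\class{W} \subseteq \class{W}'$ from the hypotheses that the cofibrant classes agree and $\class{F}' \subseteq \class{F}$. First I would extract two compatibility consequences from the cotorsion pair structure. Equality of the left classes in the cotorsion pairs $(\class{C}, \tilclass{F})$ and $(\class{C}, \tilclass{F}')$ gives $\tilclass{F} = \tilclass{F}'$, that is, $\class{W} \cap \class{F} = \class{W}' \cap \class{F}'$. Dually, the inclusion $\class{F}' \subseteq \class{F}$ applied to the cotorsion pairs $(\tilclass{C}, \class{F})$ and $(\tilclass{C}', \class{F}')$ yields $\tilclass{C} \subseteq \tilclass{C}'$, since anything left-orthogonal to the larger class $\class{F}$ is left-orthogonal to the smaller $\class{F}'$.

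With these two facts in hand, I would pick an arbitrary $X \in \class{W}$ and use completeness (enough projectives) of the cotorsion pair $(\class{C}, \tilclass{F})$ to obtain a short exact sequence $0 \to \tilde{F} \to C \to X \to 0$ with $C \in \class{C}$ and $\tilde{F} \in \tilclass{F} \subseteq \class{W}$. Thickness of $\class{W}$ forces $C \in \class{C} \cap \class{W} = \tilclass{C}$, which by the second compatibility lies in $\tilclass{C}' \subseteq \class{W}'$, while by the first compatibility $\tilde{F} \in \tilclass{F}' \subseteq \class{W}'$. Two-out-of-three thickness of $\class{W}'$ applied to this short exact sequence then puts $X \in \class{W}'$, proving $\class{W} \subseteq \class{W}'$, and the conclusion then follows from the first part of the lemma.

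I expect no serious obstacle. The only subtlety is the weak-equivalence characterization used in part one, for which I would cite Hovey; everything else is a routine manipulation of complete cotorsion pairs and the thickness of the two trivial classes.
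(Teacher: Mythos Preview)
Your proposal is correct and follows essentially the same approach as the paper. For the first assertion you both invoke Hovey's characterization of weak equivalences and the universal property of localization; for the second, the paper records the same two compatibilities $\tilclass{C} \subseteq \tilclass{C}'$ and $\tilclass{F} = \tilclass{F}'$ and then says ``one can use the factorization and two of three property'', which is precisely the argument you spell out in detail.
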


\begin{proof}
In any abelian model structure a map is a weak equivalence if and only if it factors as a monomorphism with trivial cokernel followed by an epimorphism with trivial kernel.
So if $\class{W} \subseteq \class{W}'$, then the localization functor $\gamma' : \cat{A} \xrightarrow{} \textnormal{Ho}(\cat{M}')$ sends weak equivalences in $\class{M}$ to isomorphisms. So the universal property of $\gamma$ guarantees the unique functor  $\bar{\gamma} : \textnormal{Ho}(\cat{M}) \xrightarrow{} \textnormal{Ho}(\cat{M}')$ for which $\gamma' = \bar{\gamma} \circ \gamma$.

In particular, say $\class{M} = (\class{C},\class{W},\class{F})$ and $\class{M}' = (\class{C},\class{W}',\class{F}')$ satisfy $\class{F}' \subseteq \class{F}$. Then we have the containment of trivially cofibrant objects $\class{C} \cap \class{W} \subseteq \class{C} \cap \class{W}'$ and equality of trivially fibrant objects $\class{W} \cap \class{F} = \class{W} \cap \class{F}'$. From this, one can use the factorization and two of three property to see that $\class{W} \subseteq \class{W}'$.
\end{proof}

\begin{theorem}[Right recollement theorem]\label{them-right recollement}
Let $\cat{A}$ be an abelian category with three hereditary model structures, as below, whose cores all coincide: $$\class{M}_1 = (\class{C}, \class{W}_1, \class{F}_1) , \ \ \ \class{M}_2 = (\class{C}, \class{W}_2, \class{F}_2) , \ \ \ \class{M}_3 = (\class{C}, \class{W}_3, \class{F}_3).$$ If $\class{W}_3 \cap \class{F}_1 = \class{F}_2$ and $\class{F}_3 \subseteq  \class{F}_1$ (or
equivalently, $\widetilde{\class{C}}_2 \cap \class{W}_3 = \widetilde{\class{C}}_1$ and $\class{F}_2 \subseteq \class{W}_3$), then $\textnormal{Ho}(\class{M}_1/\class{M}_2) = \textnormal{Ho}(\class{M}_3)$ and $\textnormal{Ho}(\class{M}_1/\class{M}_3) \cong \textnormal{Ho}(\class{M}_2)$. In fact, $\class{M}_1/\class{M}_3$ is Quillen equivalent to $\class{M}_2$, while $\class{M}_1/\class{M}_2$ is Quillen equivalent to $\class{M}_3$, and we even have a recollement as shown below.
\[
\begin{tikzpicture}[node distance=3.5cm]
\node (A) {$\textnormal{Ho}(\class{M}_2)$};
\node (B) [right of=A] {$\textnormal{Ho}(\class{M}_1)$};
\node (C) [right of=B] {$\textnormal{Ho}(\class{M}_3)$};
\draw[<-,bend left=40] (A.20) to node[above]{$Q_1$} (B.160);
\draw[->] (A) to node[above]{\small $R_2$} (B);
\draw[<-,bend right=40] (A.340) to node [below]{$\widetilde{Q}_3 \circ R_1$} (B.200);

\draw[<-,bend left] (B.20) to node[above]{\small $\lambda = \widetilde{Q}_2$} (C.160);
\draw[->] (B) to node[above]{$\bar{\gamma}$} (C);
\draw[<-,bend right] (B.340) to node [below]{\small $\rho = R_3$} (C.200);
\end{tikzpicture}\]
Here, the functor $\bar{\gamma}$ is the quotient functor of Lemma~\ref{lemma-quotient map}.
\end{theorem}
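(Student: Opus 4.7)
My plan is to apply Proposition~\ref{prop-right localization} to the pairs $(\class{M}_1, \class{M}_2)$ and $(\class{M}_1, \class{M}_3)$, identify the resulting homotopy categories with $\textnormal{Ho}(\class{M}_3)$ and $\textnormal{Ho}(\class{M}_2)$ respectively, and then assemble the recollement from two applications of Proposition~\ref{prop-colocalization sequence} combined with the reversal between localization and colocalization sequences noted after Definition~\ref{def-localization sequence}.

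First, write $\class{M}_1/\class{M}_2 = (\widetilde{\class{C}}_2, \class{W}, \class{F}_1)$. I would show $\class{W} = \class{W}_3$. Given $X \in \class{W}$, the short exact sequence $X \rightarrowtail F_2 \twoheadrightarrow C_1$ from the description in Proposition~\ref{prop-right localization} has $F_2 \in \class{F}_2 \subseteq \class{W}_3$ and $C_1 \in \widetilde{\class{C}}_1 = \widetilde{\class{C}}_2 \cap \class{W}_3 \subseteq \class{W}_3$, so $X \in \class{W}_3$ by thickness. Conversely, for $X \in \class{W}_3$, a complete approximation $0 \to X \to F \to C \to 0$ from the cotorsion pair $(\widetilde{\class{C}}_1, \class{F}_1)$ has $C \in \widetilde{\class{C}}_1 \subseteq \class{W}_3$, forcing $F \in \class{W}_3 \cap \class{F}_1 = \class{F}_2$ by thickness, and hence $X \in \class{W}$. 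Since weak equivalences in an abelian model structure are determined by the trivial class (as invoked in the proof of Lemma~\ref{lemma-quotient map}), this yields $\textnormal{Ho}(\class{M}_1/\class{M}_2) = \textnormal{Ho}(\class{M}_3)$.

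For the second identification, write $\class{M}_1/\class{M}_3 = (\widetilde{\class{C}}_3, \class{W}^*, \class{F}_1)$. Here the symmetric equality $\class{W}^* = \class{W}_2$ is not available from the given (asymmetric) hypothesis, and this is the main obstacle. To sidestep it I would work at the level of bifibrant objects: using $\class{W}_3 \cap \class{F}_1 = \class{F}_2$, the class of bifibrants of $\class{M}_1/\class{M}_3$ is
\[ \widetilde{\class{C}}_3 \cap \class{F}_1 = \class{C} \cap \class{W}_3 \cap \class{F}_1 = \class{C} \cap \class{F}_2, \]
which is the bifibrant class of $\class{M}_2$, while the core of $\class{M}_1/\class{M}_3$ is $\widetilde{\class{C}}_1 \cap \class{F}_1$, the common core of all three model structures by hypothesis. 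Because $\widetilde{\class{C}}_3 \subseteq \class{C}$ and $\widetilde{\class{C}}_1 \subseteq \widetilde{\class{C}}_2$, the identity functor $\textnormal{Id} \colon \class{M}_1/\class{M}_3 \to \class{M}_2$ preserves cofibrations and trivial cofibrations, hence is left Quillen. On the shared bifibrant class, the derived unit and counit reduce to the identity modulo a homotopy relation determined entirely by the common core, so this is a Quillen equivalence yielding $\textnormal{Ho}(\class{M}_1/\class{M}_3) \cong \textnormal{Ho}(\class{M}_2)$.

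Finally, I would apply Proposition~\ref{prop-colocalization sequence} to the pairs $(\class{M}_2, \class{M}_1)$ and $(\class{M}_3, \class{M}_1)$, both of whose hypotheses are satisfied. The first gives the colocalization sequence $\textnormal{Ho}(\class{M}_2) \xrightarrow{R_2} \textnormal{Ho}(\class{M}_1) \xrightarrow{R_1} \textnormal{Ho}(\class{M}_1/\class{M}_2) = \textnormal{Ho}(\class{M}_3)$ with left adjoints $Q_1$ and $\widetilde{Q}_2$; under the identification of Step~1, $R_1$ is naturally isomorphic to the quotient functor $\bar{\gamma}$ of Lemma~\ref{lemma-quotient map}, both being characterized by the universal property of inverting the weak equivalences of $\class{M}_3$ inside $\textnormal{Ho}(\class{M}_1)$. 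The second application gives the colocalization $\textnormal{Ho}(\class{M}_3) \xrightarrow{R_3} \textnormal{Ho}(\class{M}_1) \xrightarrow{R_1} \textnormal{Ho}(\class{M}_1/\class{M}_3) \cong \textnormal{Ho}(\class{M}_2)$ with left adjoints $Q_1$ and $\widetilde{Q}_3$; under the Quillen equivalence of Step~2 its right-hand functor becomes $\widetilde{Q}_3 \circ R_1$, since cofibrant replacement in $\class{M}_1/\class{M}_3$ is computed by $\widetilde{Q}_3$. Reversing this second colocalization to a localization then supplies $R_3$ as the right adjoint of $\bar{\gamma}$ and $\widetilde{Q}_3 \circ R_1$ as the right adjoint of $R_2$, which combined with the first colocalization sequence assembles the recollement drawn in the statement.
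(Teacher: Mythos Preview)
Your approach mirrors the paper's closely: identify $\class{W}$ with $\class{W}_3$ to get $\textnormal{Ho}(\class{M}_1/\class{M}_2)=\textnormal{Ho}(\class{M}_3)$, establish the Quillen equivalence $\class{M}_1/\class{M}_3 \simeq \class{M}_2$ via equality of bifibrant classes and coincidence of cores, then assemble the recollement from two applications of Proposition~\ref{prop-colocalization sequence} with one reversed. The paper does exactly this, though it proves $\class{W}=\class{W}_3$ by appealing to uniqueness of the thick class in a Hovey triple rather than checking both inclusions directly, and it verifies the Quillen equivalence by an explicit factorization-and-thickness chase rather than invoking the derived unit and counit.

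Where your sketch falls short is in the final gluing. You verify $R_1\cong\bar{\gamma}$, which lets you identify $\widetilde{Q}_2$ as the left adjoint of $\bar{\gamma}$. But to obtain $R_3$ as the \emph{right} adjoint of $\bar{\gamma}$ from the reversed second colocalization, you need the companion isomorphism $Q_1\cong\bar{\gamma}$ (as functors $\textnormal{Ho}(\class{M}_1)\to\textnormal{Ho}(\class{M}_3)$), since in that reversed sequence $R_3$ is right adjoint to $Q_1$, not to $R_1$. The paper proves this explicitly via the natural isomorphism $\{p_A\}$, using that $\widetilde{\class{F}}_1=\class{W}_3\cap\class{F}_3$. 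Similarly, to conclude that $\widetilde{Q}_3\circ R_1$ is right adjoint to $R_2$, you must show that the functor $\textnormal{Ho}(\class{M}_2)\to\textnormal{Ho}(\class{M}_1)$ obtained by transporting $\widetilde{Q}_3$ across the equivalence of Step~2 is isomorphic to $R_2$; the paper does this by exhibiting a natural isomorphism $\widetilde{Q}_3\circ R_2 \cong R_2$ using $\class{F}_2=\class{W}_3\cap\class{F}_1$ and thickness of $\class{W}_3$. Both verifications are short and in the same spirit as your $R_1\cong\bar{\gamma}$ argument, but without them the two (co)localization sequences are not yet glued along a common middle functor and a common left embedding.
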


\begin{proof}
First we show that the two conditions are equivalent. That is, $\class{W}_3 \cap \class{F}_1 = \class{F}_2$  and $\class{F}_3 \subseteq  \class{F}_1$ if and only if $\widetilde{\class{C}}_2 \cap \class{W}_3 = \widetilde{\class{C}}_1$ and $\class{F}_2 \subseteq \class{W}_3$. For the ``only if'' part, the only part that is not clear is $\widetilde{\class{C}}_2 \cap \class{W}_3 \subseteq \widetilde{\class{C}}_1$. So assume $C \in \widetilde{\class{C}}_2 \cap \class{W}_3$. Use enough projectives of $(\widetilde{\class{C}}_1, \class{F}_1)$ to find a short exact sequence $0 \xrightarrow{} F_1 \xrightarrow{} C_1 \xrightarrow{} C \xrightarrow{} 0$ with $F_1 \in \class{F}_1$, and $C_1 \in \widetilde{\class{C}}_1 \subseteq \class{W}_3$. Since $\class{W}_3$ is thick we see that $F_1 \in \class{W}_3 \cap \class{F}_1 = \class{F}_2$. So with $F_1 \in \class{F}_2$ and $C \in \widetilde{\class{C}}_2$, the short exact sequence must split, making $C$ a retract of $C_1$. Hence $C$ must be in $\widetilde{\class{C}}_1$ finishing the proof of the ``only if'' part. For the ``if'' part, the analogous part to show is $\class{W}_3 \cap \class{F}_1 \subseteq \class{F}_2$. This follows by a similar argument: For $W \in \class{W}_3 \cap \class{F}_1$ start by finding a short exact sequence $0 \xrightarrow{} W \xrightarrow{} F_2 \xrightarrow{} C_2 \xrightarrow{} 0$ with $F_2 \in \class{F}_2$, and $C_2 \in \widetilde{\class{C}}_2$. Since $\class{W}_3$ is thick we get $C_2 \in \widetilde{\class{C}}_2 \cap \class{W}_3 = \widetilde{\class{C}}_1$ and the sequence splits.

Having shown that the two conditions are equivalent, we see at once that they make $(\widetilde{\class{C}}_2, \class{W}_3, \class{F}_1)$ into a Hovey triple. By the uniqueness of the thick class in a Hovey triple (see~\cite[Proposition~3.2]{gillespie-recollements}) we conclude that $\class{M}_1/\class{M}_2 = (\widetilde{\class{C}}_2, \class{W}_3, \class{F}_1)$. It is then easy to see that the identity functor from $\class{M}_3$ to $\class{M}_1/\class{M}_2$ is a right Quillen functor. But since $\class{M}_3$ and $\class{M}_1/\class{M}_2$ both have $\class{W}_3$ as its class of trivial objects it follows (see~\cite[Lemma~5.8]{hovey} or~\cite[Lemma~2.7]{gillespie-recollements}) that the model structures have the same weak equivalences. So not only is the identity adjunction a Quillen equivalence but we have the equality $\textnormal{Ho}(\class{M}_3) = \textnormal{Ho}(\class{M}_1/\class{M}_2)$.

Note next that $\textnormal{Ho}(\class{M}_2) \cong \textnormal{Ho}(\class{M}_1/\class{M}_3)$. The reason is that $$\class{M}_1/\class{M}_3 = (\widetilde{\class{C}}_3, \class{V}, \class{F}_1)$$ for some thick class $\class{V}$, and so
$$\textnormal{Ho}(\class{M}_1/\class{M}_3)\cong (\widetilde{\class{C}}_3 \cap \class{F}_1)/\sim \ =  (\class{C} \cap \class{F}_2)/\sim  \ \cong \textnormal{Ho}(\class{M}_2).$$
The homotopy relations $\sim$ agree in the two model structures by~\cite[Proposition~4.4~(5)]{gillespie-exact model structures} since the cores of  $\class{M}_1/\class{M}_3 = (\widetilde{\class{C}}_3, \class{V}, \class{F}_1)$ and $\class{M}_2 = (\class{C}, \class{W}_2, \class{F}_2)$ coincide. We claim that the identity map $\class{M}_1/\class{M}_3 \xrightarrow{} \class{M}_2$ is a (left) Quillen equivalence. First note that it preserves cofibrant and trivially cofibrant objects, and this implies it is a left Quillen functor. To see it is a Quillen equivalence, we must by~\cite[Definition~1.3.12]{hovey-model-categories} show that for all cofibrant $A \in  \class{M}_1/\class{M}_3$ and fibrant $B \in \class{M}_2$ argue that a map $f : A \xrightarrow{} B$ is a weak equivalence in $\class{M}_2$ if and only if it is a weak equivalence in $\class{M}_1/\class{M}_3$. So let $f : A \xrightarrow{} B$ be a map with $A \in \widetilde{\class{C}}_3$ and $B \in \class{F}_2$. Assuming it is a weak equivalence in $\class{M}_2$, we may factor it as $f = pi$ where $i : A \xrightarrow{} C$ has $\cok{i} \in \widetilde{\class{C}}_2$ and $p : C \xrightarrow{} B$ has $\ker{p} \in \class{W}_2 \cap \class{F}_2 = \class{W}_3 \cap \class{F}_3$. Note then that $\ker{p} \in \class{F}_3 = \class{V} \cap \class{F}_1$ is automatically trivially fibrant in $\class{M}_1/\class{M}_3$. But since $\class{W}_3$ is thick we also have $C \in \class{W}_3$, and thus $\cok{i} \in \class{W}_3$. So $\cok{i} \in \widetilde{\class{C}}_2 \cap \class{W}_3 = \widetilde{\class{C}}_1 = \widetilde{\class{C}}_3 \cap \class{V}$. That is, $\cok{i}$ is trivially cofibrant in $\class{M}_1/\class{M}_3$. Since we have shown $f$ factors as a trivial cofibration followed by a trivial fibration in $\class{M}_1/\class{M}_3$ we conclude it is also a weak equivalence in $\class{M}_1/\class{M}_3$. Conversely, if $f$ is a weak equivalence in $\class{M}_1/\class{M}_3$, the argument reverses and we see that $f$ is a weak equivalence in $\class{M}_2$.

Now we wish to construct the recollement.
Using that $\class{F}_2 \subseteq \class{F}_1$, we apply Proposition~\ref{prop-colocalization sequence} to obtain a colocalization sequence which appears as the top horizontal row in the diagram below. However, we also have $\class{F}_3 \subseteq \class{F}_1$ and so we have the analogous colocalization sequence involving $\class{M}_3$. But we may rewrite
this last colocalization ``backwards'' so that it appears as a localization sequence. This is the bottom row of the following diagram which also uses the equality $\textnormal{Ho}(\class{M}_3) = \textnormal{Ho}(\class{M}_1/\class{M}_2)$ established above:
\[
\tag{\text{$***$}}
\begin{tikzpicture}[node distance=3.5 cm, auto]
\node (A)  {$\textnormal{Ho}(\class{M}_2)$};
\node (D) [below of=A] {$\textnormal{Ho}(\class{M}_1/\class{M}_3)$};
\node (B) [right of=A] {$\textnormal{Ho}(\class{M}_1)$};
\node (C) [right of=B] {$\textnormal{Ho}(\class{M}_1/\class{M}_2)$};
\node (E) [right of=D] {$\textnormal{Ho}(\class{M}_1)$};
\node (F) [right of=E] {$\textnormal{Ho}(\class{M}_3)$};

%
% Horizontal arrows
%
\draw[<-] (A.10) to node {$Q_1$} (B.170);
\draw[->] (A.350) to node [swap] {$R_2$} (B.190);
\draw[->] (D.6) to node {$\widetilde{Q}_3$} (E.170);
\draw[<-] (D.353) to node [swap] {$R_1$} (E.190);
\draw[->] (E.10) to node {$Q_1$} (F.170);
\draw[<-] (E.350) to node [swap] {$R_3$} (F.190);
\draw[<-] (B.10) to node {$\widetilde{Q}_2$} (C.173);
\draw[->] (B.350) to node [swap] {$R_1$} (C.187);
%
% Vertical Arrows
%
\draw[-] (B.280) to node {} (E.80);
\draw[-] (B.260) to node [swap] {} (E.100);
\draw[->] (A.290) to node {$R_2$} (D.70);
\draw[<-] (A.250) to node [swap] {$\widetilde{Q}_3$} (D.110);
\draw[-] (C.280) to node {} (F.80);
\draw[-] (C.260) to node [swap] {} (F.100);
\end{tikzpicture}
\]
Again, all top functors are left adjoints and all bottom functors are right adjoints. We now let $\bar{\gamma} : \textnormal{Ho}(\cat{M}_1) \xrightarrow{} \textnormal{Ho}(\cat{M}_3)  = \textnormal{Ho}(\class{M}_1/\class{M}_2)$ be the quotient functor of Lemma~\ref{lemma-quotient map}. We claim, that as functors $\textnormal{Ho}(\cat{M}_1)$ to $\textnormal{Ho}(\class{M}_1/\class{M}_2)$, there is a natural isomorphism of functors $\{\,j_A\,\} : \bar{\gamma} \, \cong R_1$.
Indeed given $f : A \xrightarrow{} B$, recall the functor $R_1$ works by applying enough injectives using the cotorsion pair $(\widetilde{\class{C}}_1,\class{F}_1)$. That is, $R_1$ acts by $f \mapsto \hat{f}$ where $\hat{f}$ is any map making the diagram below commute. Here, the rows are exact and $F_1,F'_1 \in \class{F}_1$ and $C_1,C'_1 \in  \widetilde{\class{C}}_1$.
$$\begin{CD}
0 @>>> A       @>j_A>>  F_1 @>>>     C_1 @>>> 0\\
@. @VVfV            @VV \hat{f} V     @.  \\
0 @>>> B       @>j_B>>       F'_1  @>>>     C'_1  @>>> 0 \\
\end{CD}$$
Since $\widetilde{\class{C}}_1 = \class{C} \cap \class{W}_1 = \leftperp{F}_1 = \widetilde{\class{C}}_2 \cap \class{W}_3$, we see in particular that $C_1,C'_1 \in \class{W}_3$. That is, they are trivial in $\class{M}_1/\class{M}_2$ which means the maps $\{\,j_A\,\}$ are providing a natural isomorphism $\{\,j_A\,\} : \bar{\gamma} \, \cong R_1$.

Similarly we get that the functors $\bar{\gamma}, Q_1 : \textnormal{Ho}(\cat{M}_1) \xrightarrow{} \textnormal{Ho}(\class{M}_3)$ admit a natural isomorphism $\{\,p_A\,\} : Q_1 \cong \, \bar{\gamma}$. To see this, recall the functor $Q_1$ works by using enough projectives of the cotorsion pair $(\class{C}, \widetilde{\class{F}}_1)$. So for a given $f : A \xrightarrow{} B$ we construct a diagram as below with exact rows and $F_1,F'_1 \in \widetilde{\class{F}}_1$ and $C_1,C'_1 \in  \class{C}$.
$$\begin{CD}
0 @>>> F_1       @>>>  C_1 @>p_A>>     A @>>> 0\\
@. @.          @V\hat{f}VV     @VfVV  \\
0 @>>> F'_1       @>>>       C'_1  @>p_B>>     B  @>>> 0 \\
\end{CD}$$
But $F_1,F'_1 \in \widetilde{\class{F}}_1 = \class{W}_1 \cap \class{F}_1 = \rightperp{C} =  \class{W}_3 \cap \class{F}_3$, so again, $F_1,F'_1$ are trivial in $\class{M}_3$. This show the maps $\{\,p_A\,\}$ are providing a natural isomorphism $\{\,p_A\,\} : Q_1 \cong \,\bar{\gamma}$.

Going back to $(***)$ one can easily check that using the natural isomorphism $\{\,j_A\,\} : \bar{\gamma} \, \cong R_1$, we may simply replace $R_1$ by $\bar{\gamma}$ and the top row will still be a colocalization sequence. Similarly, using the natural isomorphism $\{\,p_A\,\} : Q_1 \cong \,\bar{\gamma}$ we may replace $Q_1$ with $\bar{\gamma}$ and the bottom row will remain a localization sequence. That is, the right square of $(***)$ is glued along $\bar{\gamma}$, and becoming
\[
\begin{tikzpicture}[node distance=3.5cm]
\node (B) [right of=A] {$\textnormal{Ho}(\class{M}_1)$};
\node (C) [right of=B] {$\textnormal{Ho}(\class{M}_3)$};
\draw[<-,bend left] (B.20) to node[above]{\small $\lambda = \widetilde{Q}_2$} (C.160);
\draw[->] (B) to node[above]{$\bar{\gamma}$} (C);
\draw[<-,bend right] (B.340) to node [below]{\small $\rho = R_3$} (C.200);
\end{tikzpicture}
\]

We now turn to the left square of the diagram $(***)$. We claim the functor $\textnormal{Ho}(\class{M}_2) \xrightarrow{R_2} \textnormal{Ho}(\class{M}_1)$ is isomorphic to
the composite $\textnormal{Ho}(\class{M}_2) \xrightarrow{R_2} \textnormal{Ho}(\class{M}_1/\class{M}_3)  \xrightarrow{\widetilde{Q}_3} \textnormal{Ho}(\class{M}_1)$. Indeed, $R_2$ acts by $f \mapsto \hat{f}$ where $\hat{f}$ is any map making the diagram below commute, where $F_2,F'_2 \in \class{F}_2$ and $C_2,C'_2 \in  \widetilde{\class{C}}_2$.
$$\begin{CD}
0 @>>> A       @>>>  F_2 @>>>     C_2 @>>> 0\\
@. @VVfV            @VV \hat{f} V     @.  \\
0 @>>> B       @>>>       F'_2  @>>>     C'_2  @>>> 0 \\
\end{CD}$$
Now applying $\textnormal{Ho}(\class{M}_1/\class{M}_3)  \xrightarrow{\widetilde{Q}_3} \textnormal{Ho}(\class{M}_1)$ to $\hat{f}$ gives us $\bar{f}$ in the next diagram, where $F_3,F'_3 \in \class{F}_3$ and $C_3,C'_3 \in  \widetilde{\class{C}}_3 = \class{C} \cap \class{W}_3$.
$$\begin{CD}
0 @>>> F_3       @>>>  C_3 @>q_A>>     F_2 @>>> 0\\
@. @.          @V\bar{f}VV     @V\hat{f}VV  \\
0 @>>> F'_3       @>>>       C'_3  @>q_B>>     F'_2  @>>> 0 \\
\end{CD}$$ But since $F_2,F'_2 \in \class{F}_2 = \class{W}_3 \cap \class{F}_1$ and $\class{W}_3$ is thick, we see that $F_3,F'_3 \in \class{W}_3 \cap \class{F}_3 = \class{W}_1 \cap \class{F}_1.$ That is, $F_3,F'_3$ are trivial in $\class{M}_1$ and hence the maps $\{q_A\}$ are providing a natural isomorphism $\{q_A\} : \widetilde{Q}_3 \circ R_2 \cong R_2$.

Finally, recall that we have shown the vertical functors in the diagram $(***)$ to be equivalences. So each of these functors are both right and left adjoints of each other. So the isomorphism $\{q_A\} : \widetilde{Q}_3 \circ R_2 \cong R_2$ means that the functor $\textnormal{Ho}(\class{M}_2) \xrightarrow{R_2} \textnormal{Ho}(\class{M}_1)$ has right adjoint the composite $\textnormal{Ho}(\class{M}_1) \xrightarrow{R_1} \textnormal{Ho}(\class{M}_1/\class{M}_3)  \xrightarrow{\widetilde{Q}_3} \textnormal{Ho}(\class{M}_2)$.

\end{proof}

We have the dual notion of left localization, denoted $\class{M}_2\backslash\class{M}_1$, and the dual statements of the above. In particular we have the following left recollement theorem.

\begin{theorem}[Left recollement theorem]\label{them-left recollement}
Let $\cat{A}$ be an abelian category with three hereditary model structures, as below, whose cores all coincide: $$\class{M}_1 = (\class{C}_1, \class{W}_1, \class{F}) , \ \ \ \class{M}_2 = (\class{C}_2, \class{W}_2, \class{F}) , \ \ \ \class{M}_3 = (\class{C}_3, \class{W}_3, \class{F}).$$ If $\class{W}_3 \cap \class{C}_1 = \class{C}_2$ and $\class{C}_3 \subseteq  \class{C}_1$ (or
equivalently, $\widetilde{\class{F}}_2 \cap \class{W}_3 = \widetilde{\class{F}}_1$ and $\class{C}_2 \subseteq \class{W}_3$), then $\class{M}_2\backslash\class{M}_1$ is Quillen equivalent to $\class{M}_3$ and $\class{M}_3\backslash\class{M}_1$ is Quillen equivalent to $\class{M}_2$. In fact, we have a recollement as shown below.
\[
\begin{tikzpicture}[node distance=3.5cm]
\node (A) {$\textnormal{Ho}(\class{M}_2)$};
\node (B) [right of=A] {$\textnormal{Ho}(\class{M}_1)$};
\node (C) [right of=B] {$\textnormal{Ho}(\class{M}_3)$};
\draw[<-,bend left=40] (A.20) to node[above]{$\widetilde{R}_3 \circ Q_1$} (B.160);
\draw[->] (A) to node[above]{\small $Q_2$} (B);
\draw[<-,bend right=40] (A.340) to node [below]{$R_1$} (B.200);

\draw[<-,bend left] (B.20) to node[above]{\small $\lambda = Q_3$} (C.160);
\draw[->] (B) to node[above]{$\bar{\gamma}$} (C);
\draw[<-,bend right] (B.340) to node [below]{\small $\rho = \widetilde{R}_2$} (C.200);
\end{tikzpicture}\]
Here, the functor $\bar{\gamma}$ is the quotient functor of (the dual of) Lemma~\ref{lemma-quotient map}.
\end{theorem}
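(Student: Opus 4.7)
The plan is to deduce this theorem from Theorem~\ref{them-right recollement} by formal duality. Every hereditary abelian model structure $\class{M} = (\class{C}, \class{W}, \class{F})$ on $\cat{A}$ dualizes to an hereditary abelian model structure $\class{M}^{op} = (\class{F}, \class{W}, \class{C})$ on $\cat{A}^{op}$ with the same core, and under this dictionary the present hypotheses $\class{W}_3 \cap \class{C}_1 = \class{C}_2$ and $\class{C}_3 \subseteq \class{C}_1$ transform into the hypotheses $\class{W}_3 \cap \class{F}_1 = \class{F}_2$ and $\class{F}_3 \subseteq \class{F}_1$ of Theorem~\ref{them-right recollement}. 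The recollement produced there in $\cat{A}^{op}$ then yields, upon reversing all arrows, the desired recollement in the homotopy categories of the original $\class{M}_i$.

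Before invoking the duality I would record the dual of each ingredient. First, the dual of Proposition~\ref{prop-right localization}: from two complete hereditary cotorsion pairs $(\class{C}_3, \widetilde{\class{F}}_3)$ and $(\class{C}_1, \widetilde{\class{F}}_1)$ with equal cores and $\class{C}_3 \subseteq \class{C}_1$, one applies \cite[Theorem~1.1]{gillespie-hovey triples} to obtain a unique thick class $\class{V}$ making $\class{M}_3\backslash\class{M}_1 := (\class{C}_1, \class{V}, \widetilde{\class{F}}_3)$ an hereditary Hovey triple. The dual of Proposition~\ref{prop-colocalization sequence} then supplies a localization sequence $\textnormal{Ho}(\class{M}_3) \xrightarrow{Q_3} \textnormal{Ho}(\class{M}_1) \xrightarrow{Q_1} \textnormal{Ho}(\class{M}_3\backslash\class{M}_1)$ with right adjoints $R_1$ and $\widetilde{R}_3$; the three conditions of Definition~\ref{def-localization sequence} are verified by the dualized diagrammatic chases, replacing the fibrant replacement short exact sequences in the proof of Proposition~\ref{prop-colocalization sequence} by cofibrant replacement ones.

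With these ingredients, the body of the argument parallels Theorem~\ref{them-right recollement} step by step. The equivalence of the two hypothesis-forms is established by the dual splitting argument: for $W \in \class{W}_3 \cap \class{C}_1$, use enough injectives of $(\class{C}_2, \widetilde{\class{F}}_2)$ to find $0 \to F_2 \to C_2 \to W \to 0$ with $F_2 \in \widetilde{\class{F}}_2$ and $C_2 \in \class{C}_2$; thickness of $\class{W}_3$ forces $F_2 \in \widetilde{\class{F}}_2 \cap \class{W}_3 = \widetilde{\class{F}}_1$, the sequence splits, and $W \in \class{C}_2$. Uniqueness of the thick class in a Hovey triple then gives $\class{M}_2\backslash\class{M}_1 = (\class{C}_1, \class{W}_3, \widetilde{\class{F}}_2)$, and the identity adjunction from $\class{M}_3$ to $\class{M}_2\backslash\class{M}_1$ is a Quillen equivalence because both structures share $\class{W}_3$ as trivial class; the corresponding statement for $\class{M}_2$ and $\class{M}_3\backslash\class{M}_1$ is symmetric.

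The recollement is assembled exactly as in the proof of Theorem~\ref{them-right recollement}: I would glue the two localization sequences just obtained along the quotient functor $\bar{\gamma} : \textnormal{Ho}(\class{M}_1) \to \textnormal{Ho}(\class{M}_3)$ supplied by the dual of Lemma~\ref{lemma-quotient map}, and verify the natural isomorphisms $\bar{\gamma} \cong Q_1$ and $\widetilde{R}_2 \cong \bar{\gamma}$ by inspecting the explicit cofibrant and injective replacement sequences and using thickness of $\class{W}_3$ to place the relevant cokernels and kernels in the trivial classes. The main ``obstacle'' is essentially bookkeeping: under duality the replacement functors $Q, R, \widetilde{Q}, \widetilde{R}$ swap roles in pairs and left adjoints exchange with right adjoints, so one must be careful about which functor is intended at each step; no new mathematical idea is required beyond those already used in the proof of Theorem~\ref{them-right recollement}.
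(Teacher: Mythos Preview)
Your approach is correct and is precisely what the paper does: the paper offers no separate proof of Theorem~\ref{them-left recollement}, merely stating that left localization and the left recollement theorem are the dual statements of Proposition~\ref{prop-right localization}, Proposition~\ref{prop-colocalization sequence}, and Theorem~\ref{them-right recollement}. Two small bookkeeping slips of exactly the kind you anticipate: in your equivalence-of-hypotheses sketch, the sequence $0 \to F_2 \to C_2 \to W \to 0$ comes from \emph{enough projectives} of $(\class{C}_2,\widetilde{\class{F}}_2)$, not enough injectives; and near the end the natural isomorphism companion to $\bar{\gamma}\cong Q_1$ should be $R_1 \cong \bar{\gamma}$ (the dual of $Q_1 \cong \bar{\gamma}$ in the right theorem), not $\widetilde{R}_2 \cong \bar{\gamma}$, since $\widetilde{R}_2$ goes in the opposite direction.
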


\section{models for mock homotopy categories of projectives}\label{sec-models for mock}

As described in the introduction we will now give an interesting application of Theorem~\ref{them-left recollement}. In Theorem~\ref{them-recollement of Murfet} we present three interrelated Hovey triples which immediately yield the recollement of Murfet from~\cite{murfet}. We see that the recollement not only holds for many non-noetherian schemes $X$, but we give a vast generalization of the idea in Theorem~\ref{them-generalized recollement} 

Let $(\class{F},\class{C})$ denote the flat cotorsion pair in the category $\qcox$ of quasi-coherent sheaves on $X$, where $X$ is any scheme in which $\qcox$ has a flat generator. This is true for example when $X$ is a quasi-compact semi-separated scheme. Then we already have from~\cite[Theorem~5.5 and Section~5.3]{gillespie-degreewise-model-strucs}, the following two complete hereditary (small) cotorsion pairs in the category $\chqcox$ of chain complexes of quasi-coherent sheaves:
$$(\dwclass{F}, \rightperp{\dwclass{F}}) \ \ \ \ \text{and} \ \ \ \ (\exclass{F}, \rightperp{\exclass{F}}).$$
Here, $\dwclass{F}$ is the class of all complexes $X$ with each $X_n \in \class{F}$, while $\exclass{F}$ is the class of all exact complexes $X$ with each $X_n \in \class{F}$.
Using the characterizations of $\rightperp{\dwclass{F}}$ and $\rightperp{\exclass{F}}$ from~\cite[Propositions~3.2 and~3.3]{gillespie-degreewise-model-strucs} we can easily argue that each of the above cotorsion pairs has core equal to the class of all contractible complexes with components in $\class{F} \cap \class{C}$.
We also have the cotorsion pairs $(\dgclass{F}, \tilclass{C})$ and $(\tilclass{F}, \dgclass{C})$ from~\cite{gillespie} and~\cite{gillespie-quasi-coherent} which have the same cores. The following are immediate corollaries of~\cite[Theorem~1.1]{gillespie-hovey triples}.

\begin{corollary}\label{cor-degreewise flat hovey triple}
There is an hereditary abelian model structure $$\class{M}_1 = (\dwclass{F},\class{W}_1,\dgclass{C})$$ whose core equals the class of all contractible complexes with components in $\class{F} \cap \class{C}$.
\end{corollary}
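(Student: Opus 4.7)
The plan is to invoke the theorem of \cite{gillespie-hovey triples} stated at the end of Section~\ref{subsec-abelian model cats} with the two cotorsion pairs $(\tilclass{F},\dgclass{C})$ and $(\dwclass{F},\rightperp{\dwclass{F}})$ playing the roles of $(\tilclass{C},\class{F})$ and $(\class{C},\tilclass{F})$, respectively. Thus I will set $\class{C}=\dwclass{F}$ and $\class{F}=\dgclass{C}$, so that the resulting Hovey triple is $\class{M}_1 = (\dwclass{F},\class{W}_1,\dgclass{C})$ with trivially cofibrant class $\dwclass{F} \cap \class{W}_1 = \tilclass{F}$ and trivially fibrant class $\class{W}_1 \cap \dgclass{C} = \rightperp{\dwclass{F}}$.

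Completeness and heredity for both cotorsion pairs are already in hand: the pair $(\dwclass{F},\rightperp{\dwclass{F}})$ by~\cite[Theorem~5.5 and Section~5.3]{gillespie-degreewise-model-strucs}, and the pair $(\tilclass{F},\dgclass{C})$ by~\cite{gillespie} and~\cite{gillespie-quasi-coherent}. Equality of cores is also part of the setup stated just before the corollary: the cores $\dwclass{F} \cap \rightperp{\dwclass{F}}$ and $\tilclass{F} \cap \dgclass{C}$ both coincide with the class of contractible complexes whose components lie in $\class{F} \cap \class{C}$.

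The only hypothesis that still calls for a (one-line) verification is the containment $\rightperp{\dwclass{F}} \subseteq \dgclass{C}$, which plays the role of $\tilclass{F} \subseteq \class{F}$ in the cited theorem. I would dispatch this by observing that an exact complex of flats with flat cycles is in particular degreewise flat, so $\tilclass{F} \subseteq \dwclass{F}$, whence $\rightperp{\dwclass{F}} \subseteq \rightperp{\tilclass{F}} = \dgclass{C}$. This is the only step requiring any argument at all, and it is hardly an obstacle.

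The cited theorem then delivers a unique thick class $\class{W}_1$ making $(\dwclass{F},\class{W}_1,\dgclass{C})$ an abelian model structure. Heredity of the model structure follows because both of its associated cotorsion pairs are hereditary, and its core is, by construction, the common core of the two cotorsion pairs, i.e.\ the class of contractible complexes with components in $\class{F} \cap \class{C}$.
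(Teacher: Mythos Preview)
Your proof is correct and follows exactly the same approach as the paper: both apply \cite[Theorem~1.1]{gillespie-hovey triples} to the cotorsion pairs $(\dwclass{F},\rightperp{\dwclass{F}})$ and $(\tilclass{F},\dgclass{C})$, with your version simply spelling out the verification of the hypotheses (completeness, heredity, equal cores, and the containment $\rightperp{\dwclass{F}} \subseteq \dgclass{C}$) that the paper leaves to the surrounding discussion.
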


\begin{proof}
Apply~\cite[Theorem~1.1]{gillespie-hovey triples} to the cotorsion pairs $(\dwclass{F}, \rightperp{\dwclass{F}})$ and $(\tilclass{F}, \dgclass{C})$ to obtain the thick class $\class{W}_1$ yielding the Hovey triple.
\end{proof}

\begin{remark}\label{remark1}
The homotopy category Ho($\class{M}_1$) is equivalent to Murfet's \emph{mock homotopy category of projectives}, $K_m(Proj X)$. To illustrate this, we first note that for any ring $R$, we have from~\cite[Corollary~6.4]{bravo-gillespie-hovey}, the hereditary abelian model structure $(\dwclass{P},\class{W}_1, \class{A})$ on $\ch$. Here $\class{W}_1 = \rightperp{\dwclass{P}}$, where $\dwclass{P}$ is the class of all complexes of projective modules, and $\class{A}$ is the class of all chain complexes. (Following~\cite{positselski}, the complexes in $\class{W}_1$ are called \emph{contraacyclic}.) The corresponding homotopy category is equivalent to $K(Proj) = \dwclass{P}/\sim$, the homotopy category of all complexes of projectives.  Turning to Corollary~\ref{cor-degreewise flat hovey triple}, we note that if $X$ is an affine scheme, the class $\class{W}_1$ of trivial objects is exactly the same. That is, $\class{W}_1 = \rightperp{\dwclass{P}}$. (Reason)  $\rightperp{\dwclass{P}}$ is thick and Neeman has shown in~\cite{neeman-flat} that $\tilclass{F} = \dwclass{F} \cap \rightperp{\dwclass{P}}$. Lemma~\ref{lemma-half related} below now implies that $(\dwclass{F},\rightperp{\dwclass{P}},\dgclass{C})$ is a Hovey triple. But the uniqueness of the class of trivial objects in any Hovey triple implies $\class{W}_1 = \rightperp{\dwclass{P}}$. 
\end{remark}

\begin{lemma}\label{lemma-half related}
Suppose we have two complete cotorsion pairs $(\class{Q},\tilclass{R})$ and  $(\tilclass{Q},\class{R})$ in an abelian category and that we also have a thick class $\class{W}$. Then the following hold. 
\begin{enumerate}
\item  If $\tilclass{Q} = \class{Q} \cap \class{W}$ and $\tilclass{R} \subseteq \class{W}$, then also $\tilclass{R} = \class{W} \cap \class{R}$. That is, $(\class{Q},\class{W},\class{R})$ is a Hovey triple. 
\item  If $\tilclass{R} = \class{W} \cap \class{R}$ and $\tilclass{Q} \subseteq \class{W}$, then also $\tilclass{Q} = \class{Q} \cap \class{W}$. That is, $(\class{Q},\class{W},\class{R})$ is a Hovey triple.
\end{enumerate}
\end{lemma}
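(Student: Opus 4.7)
The plan is to prove (1) directly from the cotorsion pair axioms and thickness of $\class{W}$, and then note that (2) follows by a dual argument. First I would establish the ``easy'' inclusion $\tilclass{R}\subseteq\class{W}\cap\class{R}$ by observing that $\tilclass{R}\subseteq\class{W}$ is given, while $\tilclass{R}\subseteq\class{R}$ follows formally from $\tilclass{Q}=\class{Q}\cap\class{W}\subseteq\class{Q}$, which passes to right orthogonals as $\tilclass{R}=\rightperp{\class{Q}}\subseteq\rightperp{\tilclass{Q}}=\class{R}$.

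The main content is the reverse inclusion $\class{W}\cap\class{R}\subseteq\tilclass{R}$. Given $X\in\class{W}\cap\class{R}$, my strategy is to manufacture a splitting. Since $(\class{Q},\tilclass{R})$ is complete, I apply enough injectives to produce a short exact sequence $0\to X\to \widetilde{R}\to Q\to 0$ with $\widetilde{R}\in\tilclass{R}$ and $Q\in\class{Q}$. The thickness of $\class{W}$, together with $X\in\class{W}$ and $\widetilde{R}\in\tilclass{R}\subseteq\class{W}$, forces $Q\in\class{W}$, hence $Q\in\class{Q}\cap\class{W}=\tilclass{Q}$. But $X\in\class{R}=\rightperp{\tilclass{Q}}$ means $\Ext^{1}(Q,X)=0$, so the sequence splits. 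Thus $X$ is a retract of $\widetilde{R}\in\tilclass{R}$, and since the right half of any cotorsion pair is closed under retracts, $X\in\tilclass{R}$.

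For (2) I would run the dual argument: given $X\in\class{Q}\cap\class{W}$, use enough projectives of $(\tilclass{Q},\class{R})$ to obtain $0\to R\to\widetilde{Q}\to X\to 0$ with $\widetilde{Q}\in\tilclass{Q}\subseteq\class{W}$ and $R\in\class{R}$; thickness gives $R\in\class{W}\cap\class{R}=\tilclass{R}$, the sequence splits against $X\in\class{Q}=\leftperp{\tilclass{R}}$, and $X$ is a retract of $\widetilde{Q}\in\tilclass{Q}$.

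I do not anticipate a serious obstacle: the only subtle point is recognizing that completeness of the ``opposite'' cotorsion pair (the one whose orthogonality class we are trying to identify) is precisely what lets us build the splittable short exact sequence, and that closure of the halves of a cotorsion pair under retracts does the final bookkeeping. Once the Hovey triple equality is established, the claim that $(\class{Q},\class{W},\class{R})$ is a Hovey triple is immediate from Hovey's correspondence, since we have the two complete cotorsion pairs $(\class{Q}\cap\class{W},\class{R})=(\tilclass{Q},\class{R})$ and $(\class{Q},\class{W}\cap\class{R})=(\class{Q},\tilclass{R})$ together with the assumed thickness of $\class{W}$.
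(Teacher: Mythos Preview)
Your proposal is correct and follows essentially the same approach as the paper: establish $\tilclass{R}\subseteq\class{W}\cap\class{R}$ from $\tilclass{Q}\subseteq\class{Q}$ and the hypothesis, then for the reverse inclusion use enough injectives of $(\class{Q},\tilclass{R})$ on $X\in\class{W}\cap\class{R}$, invoke thickness to push $Q$ into $\tilclass{Q}$, split the sequence, and conclude via closure under retracts. Your write-up is actually a bit more explicit than the paper's (which leaves the thickness and $\Ext$-vanishing steps implicit), but the argument is the same.
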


\begin{proof}
The statements are proved similarly, and we will prove (1). We have by assumption that $\tilclass{Q} \subseteq \class{Q}$ and consequently $\tilclass{R} \subseteq \class{R}$. We are also assuming $\tilclass{R} \subseteq \class{W}$, and so we have $\tilclass{R} \subseteq \class{W} \cap \class{R}$. 

It is left to show $\tilclass{R} \supseteq \class{W} \cap \class{R}$. Letting $X \in \class{W} \cap \class{R}$, we use completeness of the cotorsion pair $(\class{Q},\tilclass{R})$ to find a short exact sequence $0 \xrightarrow{} X \xrightarrow{} R \xrightarrow{} Q \xrightarrow{} 0$ with $R \in \tilclass{R}$ and $Q \in \class{Q}$. We see that this forces $Q \in \class{Q} \cap \class{W} = \tilclass{Q}$. Hence the sequence must split, forcing $X$ to be a retract of an object in $\tilclass{R}$. So $X$ is also in $\tilclass{R}$. 
\end{proof}

\begin{corollary}\label{cor-exact degreewise flat hovey triple}
There is an hereditary abelian model structure $$\class{M}_2 = (\exclass{F},\class{W}_2,\dgclass{C})$$ whose core equals the class of all contractible complexes with components in $\class{F} \cap \class{C}$.
\end{corollary}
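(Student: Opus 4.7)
The plan is to apply~\cite[Theorem~1.1]{gillespie-hovey triples} in direct analogy with the proof of Corollary~\ref{cor-degreewise flat hovey triple}, but this time fed the two complete hereditary cotorsion pairs $(\exclass{F},\rightperp{\exclass{F}})$ and $(\tilclass{F},\dgclass{C})$. The first was recalled from~\cite[Theorem~5.5 and Section~5.3]{gillespie-degreewise-model-strucs}, while the second is from~\cite{gillespie} and~\cite{gillespie-quasi-coherent}. Matching notation with the statement of~\cite[Theorem~1.1]{gillespie-hovey triples}, I would take $(\tilclass{C},\class{F}) = (\tilclass{F},\dgclass{C})$ and $(\class{C},\tilclass{F}) = (\exclass{F},\rightperp{\exclass{F}})$, so that the resulting trivially cofibrant class is forced to be $\tilclass{F}$ and the resulting trivially fibrant class is forced to be $\rightperp{\exclass{F}}$.

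Next I would verify the two hypotheses required by the theorem. The containment $\rightperp{\exclass{F}} \subseteq \dgclass{C}$ follows at once by taking right orthogonals of the evident inclusion $\tilclass{F} \subseteq \exclass{F}$, using that $\rightperp{\tilclass{F}} = \dgclass{C}$. For the equality of cores, the work has already been done in the discussion preceding Corollary~\ref{cor-degreewise flat hovey triple}: the core of $(\exclass{F},\rightperp{\exclass{F}})$ was identified there as the class of all contractible complexes with components in $\class{F} \cap \class{C}$, and it was also noted that $(\tilclass{F},\dgclass{C})$ has this same core.

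With both hypotheses in hand, \cite[Theorem~1.1]{gillespie-hovey triples} produces a unique thick class $\class{W}_2$ making $\class{M}_2 = (\exclass{F},\class{W}_2,\dgclass{C})$ into an hereditary abelian model structure whose core is the common one above. The argument is essentially bookkeeping; the only mild obstacle is keeping straight which of the two input cotorsion pairs plays the role of $(\tilclass{C},\class{F})$ and which plays the role of $(\class{C},\tilclass{F})$ in~\cite[Theorem~1.1]{gillespie-hovey triples}, and correspondingly which class becomes the trivially cofibrant class and which becomes the trivially fibrant class in the output Hovey triple.
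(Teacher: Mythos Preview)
Your proposal is correct and follows exactly the same approach as the paper's proof, which simply says to apply \cite[Theorem~1.1]{gillespie-hovey triples} to the cotorsion pairs $(\exclass{F}, \rightperp{\exclass{F}})$ and $(\tilclass{F}, \dgclass{C})$. You have supplied more detail than the paper does by explicitly checking the containment $\rightperp{\exclass{F}} \subseteq \dgclass{C}$ and the equality of cores, but these verifications are implicit in the paper's one-line argument.
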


\begin{proof}
Apply~\cite[Theorem~1.1]{gillespie-hovey triples} to the cotorsion pairs $(\exclass{F}, \rightperp{\exclass{F}})$ and $(\tilclass{F}, \dgclass{C})$ to obtain the thick class $\class{W}_2$ yielding the Hovey triple.
\end{proof}

\begin{remark}\label{remark-new}
The homotopy category Ho($\class{M}_2$) is equivalent to Murfet's \emph{mock projective stable derived category}, $K_{m,ac}(Proj X)$. Again, a model was constructed in~\cite[Corollary~6.5]{bravo-gillespie-hovey} for the \emph{projective stable derived category} of an arbitrary ring $R$. It is the hereditary abelian model structure $(\exclass{P},\class{W}_2, \class{A})$ on $\ch$ where $\exclass{P}$ is the class of all exact (acyclic) complexes of projective modules, $\class{W}_2 = \rightperp{\exclass{P}}$, and $\class{A}$ is the class of all chain complexes. The corresponding homotopy category is equivalent to $K_{ac}(Proj) = \exclass{P}/\sim$, the homotopy category of all exact complexes of projectives.
Interpreting Corollary~\ref{cor-exact degreewise flat hovey triple} in the affine case we do in fact have the desired equality $\class{W}_2 = \rightperp{\exclass{P}}$. (Reason)  Similar to the Remark following Corollary~\ref{cor-degreewise flat hovey triple}, it will follow from Lemma~\ref{lemma-half related} by  showing 
$\tilclass{F} = \exclass{F} \cap \rightperp{\exclass{P}}$. But letting $\class{E}$ denote the class of all exact complexes we have $$\exclass{F} \cap \rightperp{\exclass{P}} = (\dwclass{F} \cap \class{E}) \cap \rightperp{\exclass{P}} = \dwclass{F} \cap (\class{E} \cap \rightperp{\exclass{P}}) = \dwclass{F} \cap \rightperp{\dwclass{P}} = \tilclass{F}.$$ The last equality is again using Neeman's result, but the second to last equality also requires justification. To see $\class{E} \cap \rightperp{\exclass{P}} = \rightperp{\dwclass{P}}$ we use yet another application of Lemma~\ref{lemma-half related}. Indeed, 
as shown in~\cite{bravo-gillespie-hovey}, we know that $(\dwclass{P}, \rightperp{\dwclass{P}})$ and $(\exclass{P}, \rightperp{\exclass{P}})$ are each complete cotorsion pairs and of course $\class{E}$ is thick.  
\end{remark}

\begin{corollary}\label{cor-dgflat hovey triple}
Let $\class{E}$ denote the class of all exact complexes.
There is an hereditary abelian model structure $$\class{M}_3 = (\dgclass{F},\class{E},\dgclass{C})$$ whose core equals the class of all contractible complexes with components in $\class{F} \cap \class{C}$.
\end{corollary}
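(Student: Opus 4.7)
The plan is to apply \cite[Theorem~1.1]{gillespie-hovey triples} a third time, now to the pair of complete hereditary cotorsion pairs $(\tilclass{F}, \dgclass{C})$ and $(\dgclass{F}, \tilclass{C})$ coming from \cite{gillespie, gillespie-quasi-coherent}. The hypotheses to verify are routine: the containment $\tilclass{C} \subseteq \dgclass{C}$ is immediate since $\tilclass{C}$ is just the exact subclass of $\dgclass{C}$, and the two pairs share a common core, namely the class of contractible complexes with components in $\class{F} \cap \class{C}$. This is the same core identified in the discussion preceding Corollary~\ref{cor-degreewise flat hovey triple}. The cited theorem then delivers a unique thick class $\class{W}$ making $(\dgclass{F}, \class{W}, \dgclass{C})$ into a Hovey triple whose associated cotorsion pairs are $(\tilclass{F}, \dgclass{C})$ and $(\dgclass{F}, \tilclass{C})$.

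Next I would identify $\class{W}$ with $\class{E}$, the class of exact complexes, by appealing to the uniqueness of the thick class in a Hovey triple. The class $\class{E}$ is certainly thick: retract closure and the two-of-three property both follow from the long exact sequence in homology. The two remaining identifications $\dgclass{F} \cap \class{E} = \tilclass{F}$ and $\dgclass{C} \cap \class{E} = \tilclass{C}$ are standard consequences of the definitions of $\tilclass{F}$ and $\tilclass{C}$ together with the cotorsion pair descriptions from \cite{gillespie, gillespie-quasi-coherent}, so $(\dgclass{F}, \class{E}, \dgclass{C})$ is also a Hovey triple with the same two associated cotorsion pairs. By uniqueness, $\class{W} = \class{E}$.

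The hereditary property is built in since both of the constituent cotorsion pairs are already hereditary in the cited references. I do not anticipate any genuine obstacle here — the real content lies in the quoted results. The one step most worth highlighting is the identification $\class{W} = \class{E}$, since the preceding two corollaries left the thick class unnamed; here, in contrast, the standard flat model structure on complexes tells us exactly which complexes to expect as the trivial ones, and the uniqueness clause of \cite[Theorem~1.1]{gillespie-hovey triples} does the work of pinning this down.
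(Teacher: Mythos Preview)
Your argument is correct, but it takes a detour the paper avoids. The paper's proof is a single sentence: this \emph{is} the flat model structure already constructed in \cite{gillespie} and \cite{gillespie-quasi-coherent}, where $(\dgclass{F},\class{E},\dgclass{C})$ is shown directly to be a Hovey triple with associated cotorsion pairs $(\dgclass{F},\tilclass{C})$ and $(\tilclass{F},\dgclass{C})$. No appeal to \cite[Theorem~1.1]{gillespie-hovey triples} is needed here, because unlike in Corollaries~\ref{cor-degreewise flat hovey triple} and~\ref{cor-exact degreewise flat hovey triple} the thick class is already known explicitly from the older literature. Your route---apply the new theorem to produce an abstract $\class{W}$, then invoke uniqueness to pin it down as $\class{E}$---works, but the identifications $\dgclass{F}\cap\class{E}=\tilclass{F}$ and $\dgclass{C}\cap\class{E}=\tilclass{C}$ you need for the uniqueness step are exactly the content of the cited papers, so you end up citing the same sources for the same facts, just wrapped in an extra layer. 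The paper's approach buys brevity; yours buys a uniform presentation parallel to the two preceding corollaries, which has some expository appeal but no mathematical gain.
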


\begin{proof}
This is the flat model structure from~\cite{gillespie} and~\cite{gillespie-quasi-coherent} corresponding to $(\dgclass{F}, \tilclass{C})$ and $(\tilclass{F}, \dgclass{C})$.
\end{proof}

\begin{remark}\label{remark2}
$\class{M}_3$ is a model for $\class{D}(X)$, the derived category of the scheme $X$. This follows from the fact that the trivial objects are precisely the exact complexes. In the affine case we do indeed have $\class{E} = \rightperp{\dgclass{P}}$, where $\dgclass{P}$ is the class of all DG-projective complexes.
\end{remark}

\begin{theorem}\label{them-recollement of Murfet}
Let $X$ be any scheme in which $\qcox$ has a flat generator. Then $\class{M}_1,\class{M}_2,\class{M}_3$ above satisfy the hypotheses of Theorem~\ref{them-left recollement} and recover the recollement of Murfet.
\[
\begin{tikzpicture}[node distance=3.5cm]
\node (A) {$K_{m,ac}(Proj X)$};
\node (B) [right of=A] {$K_m(Proj X)$};
\node (C) [right of=B] {$\class{D}(X)$};
\draw[<-,bend left=40] (A.20) to node[above]{$\widetilde{R}_3 \circ Q_1$} (B.160);
\draw[->] (A) to node[above]{\small $Q_2$} (B);
\draw[<-,bend right=40] (A.340) to node [below]{$R_1$} (B.200);

\draw[<-,bend left] (B.20) to node[above]{\small $\lambda = Q_3$} (C.160);
\draw[->] (B) to node[above]{$\bar{\gamma}$} (C);
\draw[<-,bend right] (B.340) to node [below]{\small $\rho = \widetilde{R}_2$} (C.200);
\end{tikzpicture}\]
\end{theorem}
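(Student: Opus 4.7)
The plan is to verify that $\class{M}_1, \class{M}_2, \class{M}_3$ from Corollaries~\ref{cor-degreewise flat hovey triple},~\ref{cor-exact degreewise flat hovey triple}, and~\ref{cor-dgflat hovey triple} meet the hypotheses of Theorem~\ref{them-left recollement}, and then to identify each of the three resulting homotopy categories with the corresponding triangulated category appearing in Murfet's recollement.

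All three triples are hereditary and share the common fibrant class $\dgclass{C}$, and by the three corollaries their cores all equal the class of contractible complexes with components in $\class{F} \cap \class{C}$. So what remains is to check the two conditions on the cofibrant classes. Writing $\class{C}_1 = \dwclass{F}$, $\class{C}_2 = \exclass{F}$, $\class{C}_3 = \dgclass{F}$, and $\class{W}_3 = \class{E}$, both are immediate from the definitions: $\class{W}_3 \cap \class{C}_1 = \class{E} \cap \dwclass{F} = \exclass{F} = \class{C}_2$, and $\class{C}_3 = \dgclass{F} \subseteq \dwclass{F} = \class{C}_1$ since any DG-flat complex has flat components. Theorem~\ref{them-left recollement} therefore applies and yields a recollement of exactly the displayed shape.

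Next I would identify the three homotopy categories. For $\class{M}_3$ the trivial objects are the exact complexes, so $\textnormal{Ho}(\class{M}_3) \simeq \class{D}(X)$ as noted in the remark following Corollary~\ref{cor-dgflat hovey triple}. For $\class{M}_1$ and $\class{M}_2$, one restricts each Hovey triple to its class of cofibrant objects; the general principle that $\textnormal{Ho}(\class{M})$ is computed as the category of cofibrant-fibrant objects modulo homotopy shows that these restrictions give exact model structures on the exact categories $\dwclass{F}$ and $\exclass{F}$ whose homotopy categories are Quillen equivalent to $\textnormal{Ho}(\class{M}_1)$ and $\textnormal{Ho}(\class{M}_2)$. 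By comparing cotorsion pairs one sees that these are precisely the exact model structures constructed in~\cite{stovicek-exact model cats} and~\cite{gillespie-recollements2}, whose homotopy categories are Murfet's $K_m(Proj X)$ and $K_{m,ac}(Proj X)$ respectively.

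The main obstacle is the final identification step: one must confirm that the trivially cofibrant and trivially fibrant objects of $\class{M}_1$ and $\class{M}_2$, when intersected with $\dwclass{F}$ and $\exclass{F}$ respectively, really do coincide with the trivially cofibrant and trivially fibrant classes of the exact models of~\cite{stovicek-exact model cats,gillespie-recollements2}. Granting this, the left and right adjoints of the recollement are read off from the four approximation sequences attached to the three Hovey triples exactly as prescribed by the diagram in Theorem~\ref{them-left recollement}, and one recovers Murfet's recollement as a formal consequence of the abstract machinery.
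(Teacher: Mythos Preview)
Your verification of the hypotheses of Theorem~\ref{them-left recollement} is exactly what the paper intends: the paper gives no explicit proof of Theorem~\ref{them-recollement of Murfet}, treating it as immediate once Corollaries~\ref{cor-degreewise flat hovey triple}--\ref{cor-dgflat hovey triple} and the accompanying Remarks are in place. The two conditions $\class{E} \cap \dwclass{F} = \exclass{F}$ and $\dgclass{F} \subseteq \dwclass{F}$ are indeed definitional, and the common core and common fibrant class are recorded in the corollaries.

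Where you take a slightly longer route is in the identification of $\textnormal{Ho}(\class{M}_1)$ and $\textnormal{Ho}(\class{M}_2)$. You pass through the restriction to cofibrant objects and compare with the exact model structures of~\cite{stovicek-exact model cats, gillespie-recollements2}; the paper instead simply asserts these identifications in Remarks~\ref{remark1} and~\ref{remark-new} (the first sentence of each). Your approach is the one sketched in the Introduction of the paper, so it is certainly in the intended spirit, and the ``obstacle'' you flag is real but mild: the restricted model on $\dwclass{F}$ has cofibrant class all of $\dwclass{F}$, trivially cofibrant class $\tilclass{F}$, and fibrant class $\dwclass{F} \cap \dgclass{C}$, which is exactly the data of the exact models in the cited references. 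Citing the Remarks directly would shorten your argument without changing its substance.
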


\subsection{Generalizations}\label{subsec-generalizations}

Rather than $\qcox$, one may instead wish to consider the flat cotorsion pair $(\class{F},\class{C})$ in the category of sheaves of $\class{O}$-modules where $\class{O} = (\class{O}_X,X)$ is a ringed space. In fact, one may wish to start with other Grothendieck categories and cotorsion pairs other than the flat one.

So let $\cat{G}$ be any Grothendieck category and let $(\class{A},\class{B})$ be a complete hereditary cotorsion pair. Then from~\cite[Proposition~3.2 and~3.3]{gillespie-degreewise-model-strucs} we have the first two cotorsion pairs below, and from~\cite[Proposition~3.6]{gillespie} we have the second two cotorsion pairs below.
$$(\dwclass{A}, \rightperp{\dwclass{A}}) \ , \ \ \ \ (\exclass{A}, \rightperp{\exclass{A}}) \ , \ \ \ \ (\dgclass{A},\tilclass{B}) \ , \ \ \ \ (\tilclass{A},\dgclass{B})$$
The definitions are analogous to those above using the flat cotorsion pair $(\class{F},\class{C})$. They are all hereditary since $(\class{A},\class{B})$ is assumed to be. Moreover, just as in the flat situation, we can easily argue that each of the above cotorsion pairs has core equal to the class of all contractible complexes with components in $\class{A} \cap \class{B}$.

Now assume $(\class{A},\class{B})$ is cogenerated by a set $\class{S}$ and that $\class{A}$ contains a generating set $\{U_i\}$ for $\cat{G}$. The methods of~\cite{gillespie-degreewise-model-strucs}, \cite[Proposition~3.6]{gillespie}, and~\cite{gillespie-quasi-coherent} proving completeness of the four associated cotorsion pairs above rely on the extra assumption that $\class{A}$ be closed under direct limits. Others have since removed this assumption, giving us the following lemma.

\begin{lemma}\label{lemma-completeness of cotorsion pairs}
Let $\cat{G}$ be any Grothendieck category and $(\class{A},\class{B})$ an hereditary cotorsion pair cogenerated by some set $\class{S}$ and such that $\class{A}$ contains a generating set $\{U_i\}$ for $\cat{G}$. Then the four induced cotorsion pairs on $\cha{G}$ listed above are also cogenerated by a set and complete cotorsion pairs.
\end{lemma}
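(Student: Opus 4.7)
The plan is to apply Proposition~\ref{prop-cotorsion pairs and deconstructibility}, \emph{inside the Grothendieck category} $\cha{G}$, to each of the four cotorsion pairs in turn. That proposition tells us the original hypothesis that $(\class{A},\class{B})$ be cogenerated by a set and satisfy $\{U_i\} \subseteq \class{A}$ is equivalent to $\class{A}$ being deconstructible, closed under retracts, and containing the generating set $\{U_i\}$ of $\cat{G}$; so we may fix a set $\class{S}'$ with $\class{A} = \textnormal{Filt-}\class{S}'$. Writing $D^n(U_i)$ for the contractible complex with $U_i$ in degrees $n$ and $n-1$ and identity differential, the family $\{D^n(U_i) : n \in \Z,\, i \in I\}$ is a generating set for $\cha{G}$, and each such disk is an exact contractible complex with every component and every cycle lying in $\class{A}$. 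Hence this single generating set is contained simultaneously in all four classes $\dwclass{A}$, $\exclass{A}$, $\tilclass{A}$, $\dgclass{A}$, each of which is also closed under retracts (being characterized either by degreewise or cycle-wise membership in $\class{A}$, or by an $\Ext^1$-vanishing condition).

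The remaining, and main, task is to establish deconstructibility of each of the four classes in $\cha{G}$; this is precisely where the original proofs needed $\class{A}$ to be closed under direct limits, and is exactly what the later work of {\v{S}}\v{t}ov{\'{\i}}{\v{c}}ek~\cite{stovicek} supplies. The plan is to invoke {\v{S}}\v{t}ov{\'{\i}}{\v{c}}ek's general deconstructibility result: starting from a deconstructible class in a Grothendieck category, the induced classes of degreewise, exact-degreewise, and ``$\widetilde{\phantom{A}}$''-style complexes are again deconstructible, via an explicit construction of a deconstructing set built from the disks and from bounded exact complexes on generators in $\class{S}' \cup \{U_i\}$. Deconstructibility of $\dgclass{A}$ then follows from that of $\tilclass{A}$ by the paired orthogonality: once $(\tilclass{A},\dgclass{B})$ is known to be a complete cotorsion pair cogenerated by a set, the formal orthogonalities $\dgclass{A} = \leftperp{(\tilclass{B})}$ and $\tilclass{B} = \rightperp{(\dgclass{A})}$ give the conclusion for the other pair via Proposition~\ref{prop-cotorsion pairs and deconstructibility}.

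With deconstructibility in hand for each class, an application of the implication (1)$\Rightarrow$(2) of Proposition~\ref{prop-cotorsion pairs and deconstructibility} in the Grothendieck category $\cha{G}$ produces, for each of the four classes, a complete cotorsion pair cogenerated by a set, as required. The main obstacle is the deconstructibility step for $\tilclass{A}$, since without direct-limit closure of $\class{A}$ one cannot simply take colimits of Hill-style filtrations and remain inside $\class{A}$. I would resolve this by citing {\v{S}}\v{t}ov{\'{\i}}{\v{c}}ek's machinery directly rather than reproducing it; with that one input, the remainder of the argument is the bookkeeping indicated above.
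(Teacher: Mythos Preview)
Your overall strategy coincides with the paper's: reduce everything to deconstructibility via Proposition~\ref{prop-cotorsion pairs and deconstructibility} applied inside $\cha{G}$, note that the disks $\{D^n(U_i)\}$ form a generating set lying in all four classes, and then invoke {\v{S}}\v{t}ov{\'{\i}}{\v{c}}ek's results from~\cite{stovicek} for the deconstructibility step. That part is fine.

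The gap is in your treatment of $\dgclass{A}$. You claim that deconstructibility of $\dgclass{A}$ ``follows from that of $\tilclass{A}$ by the paired orthogonality'': once $(\tilclass{A},\dgclass{B})$ is complete and cogenerated by a set, the identities $\dgclass{A} = \leftperp{(\tilclass{B})}$ and $\tilclass{B} = \rightperp{(\dgclass{A})}$ should give the conclusion via Proposition~\ref{prop-cotorsion pairs and deconstructibility}. But that proposition, applied to the pair $(\dgclass{A},\tilclass{B})$, requires as input either that $\dgclass{A}$ is deconstructible, or that $(\dgclass{A},\tilclass{B})$ is cogenerated by a set. Knowing that the \emph{other} pair $(\tilclass{A},\dgclass{B})$ is cogenerated by a set says nothing about either of these; the two cotorsion pairs have different left-hand classes and there is no formal passage from one to the other through Proposition~\ref{prop-cotorsion pairs and deconstructibility}. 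So this step does not go through as written.

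In fact you have the roles of $\dgclass{A}$ and $\exclass{A}$ reversed relative to what {\v{S}}\v{t}ov{\'{\i}}{\v{c}}ek's theorem delivers. The paper cites~\cite[Theorem~4.2]{stovicek} to obtain deconstructibility of $\dwclass{A}$, $\dgclass{A}$, and $\tilclass{A}$ directly; it is $\exclass{A}$ that requires a separate argument. That argument is short: write $\exclass{A} = \dwclass{A} \cap \class{E}$, observe that $\class{E}$ is deconstructible (apply~\cite[Theorem~4.2(2)]{stovicek} to the deconstructible class of all objects of $\cat{G}$, whose associated $\widetilde{\phantom{A}}$-class is exactly $\class{E}$), and then use that the intersection of two deconstructible classes is deconstructible by~\cite[Proposition~2.9(2)]{stovicek}. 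With this correction your proof matches the paper's.
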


\begin{proof}
We note that in the case of a concrete Grothendieck category, Theorem~3.1 of~\cite{enochs-model strucs} immediately gives us the result. For a general Grothendieck category we can argue as follows. First, note that $\{D^n(U_i)\}$ is a generating set for $\cha{G}$ and is contained in each of the four classes $\dwclass{A}$, $\exclass{A}$, $\dgclass{A}$, and $\tilclass{A}$. By Proposition~\ref{prop-cotorsion pairs and deconstructibility} we see that $\class{A}$ is deconstructible and that it is enough to show each of these four classes is also deconstructible. But {\v{S}}\v{t}ov{\'{\i}}{\v{c}}ek has shown in~\cite[Theorem~4.2]{stovicek} that since $\class{A}$ is deconstructible we automatically must have that $\dwclass{A}$, $\dgclass{A}$, and $\tilclass{A}$ are each deconstructible. This proves the lemma for three of the four cotorsion pairs but we still must deal with $\exclass{A}$. Here we note that $\exclass{A} = \dwclass{A} \cap \class{E}$ where $\class{E}$ is the class of all exact complexes. However, in any Grothendieck category $\cat{G}$, the class of all objects is itself deconstructible and from this it follows from~\cite[Theorem~4.2~(2)]{stovicek} that $\class{E}$ is also deconstructible. We now turn around and conclude that the intersection of the two deconstructible classes $\exclass{A} = \dwclass{A} \cap \class{E}$ is also deconstructible by~\cite[Proposition~2.9~(2)]{stovicek}.
\end{proof}

 It now follows from the above that Corollaries~\ref{cor-degreewise flat hovey triple}, \ref{cor-exact degreewise flat hovey triple}, and~\ref{cor-dgflat hovey triple} all have direct generalizations from the flat cotorsion pair $(\class{F},\class{C})$ to more general hereditary cotorsion pairs $(\class{A},\class{B})$. This leads us to the following generalization of Theorem~\ref{them-recollement of Murfet}

\begin{theorem}\label{them-generalized recollement}
Let $\cat{G}$ be any Grothendieck category and $(\class{A},\class{B})$ an hereditary cotorsion pair cogenerated by some set $\class{S}$ and such that $\class{A}$ contains a generating set $\{U_i\}$ for $\cat{G}$. Then we have three hereditary abelian model structure
$$\class{M}_1 = (\dwclass{A},\class{W}_1,\dgclass{B}) \ , \ \ \class{M}_2 = (\exclass{A},\class{W}_2,\dgclass{B}) \ , \ \ \class{M}_3 = (\dgclass{A},\class{E},\dgclass{B}).$$
They satisfy that each has core equaling the class of all contractible complexes with components in $\class{A} \cap \class{B}$ and so Theorem~\ref{them-left recollement} applies to yield a recollement.
\end{theorem}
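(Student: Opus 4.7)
The plan is to mimic the proof of Theorem~\ref{them-recollement of Murfet}, using Lemma~\ref{lemma-completeness of cotorsion pairs} in place of the completeness results of~\cite{gillespie-degreewise-model-strucs} and~\cite{gillespie} that were available in the flat case. First I would invoke Lemma~\ref{lemma-completeness of cotorsion pairs} to obtain the four complete cotorsion pairs on $\cha{G}$, namely $(\dwclass{A},\rightperp{\dwclass{A}})$, $(\exclass{A},\rightperp{\exclass{A}})$, $(\dgclass{A},\tilclass{B})$, and $(\tilclass{A},\dgclass{B})$. Each of them is hereditary because $(\class{A},\class{B})$ is hereditary. Using the characterizations of $\rightperp{\dwclass{A}}$ and $\rightperp{\exclass{A}}$ from the arguments of~\cite[Propositions~3.2 and~3.3]{gillespie-degreewise-model-strucs} (whose proofs apply to any hereditary cotorsion pair), together with the standard description of $\tilclass{A}\cap\tilclass{B}$, a direct computation shows that all four pairs share a common core $\class{K}$ equal to the class of contractible complexes with components in $\class{A}\cap\class{B}$.

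Next, I would apply~\cite[Theorem~1.1]{gillespie-hovey triples} three times, in parallel with the proofs of Corollaries~\ref{cor-degreewise flat hovey triple}, \ref{cor-exact degreewise flat hovey triple}, and~\ref{cor-dgflat hovey triple}. The pairs $(\dwclass{A},\rightperp{\dwclass{A}})$ and $(\tilclass{A},\dgclass{B})$ yield $\class{M}_1$; the pairs $(\exclass{A},\rightperp{\exclass{A}})$ and $(\tilclass{A},\dgclass{B})$ yield $\class{M}_2$; and the pairs $(\dgclass{A},\tilclass{B})$ and $(\tilclass{A},\dgclass{B})$ yield $\class{M}_3$. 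For $\class{M}_3$, the thick class of trivial objects is forced to be $\class{E}$, since the standard identifications $\tilclass{A}=\dgclass{A}\cap\class{E}$ and $\tilclass{B}=\dgclass{B}\cap\class{E}$ pin it down uniquely via the uniqueness clause of~\cite[Theorem~1.1]{gillespie-hovey triples}. In each case the resulting Hovey triple is hereditary with core $\class{K}$.

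Finally, I would verify the hypotheses of Theorem~\ref{them-left recollement}. By construction all three model structures share the fibrant class $\class{F}=\dgclass{B}$ and have equal cores $\class{K}$. The containment $\class{C}_3=\dgclass{A}\subseteq\dwclass{A}=\class{C}_1$ is immediate from the definitions, and the identity $\class{W}_3\cap\class{C}_1=\class{E}\cap\dwclass{A}=\exclass{A}=\class{C}_2$ holds by the very definition of $\exclass{A}$. Theorem~\ref{them-left recollement} then produces the desired recollement. The only nontrivial step is the verification in the first paragraph that the four cotorsion pairs have a common core; however, as noted in the paragraph preceding Lemma~\ref{lemma-completeness of cotorsion pairs}, this proceeds exactly as in the flat case and presents no new difficulty.
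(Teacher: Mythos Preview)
Your proposal is correct and follows exactly the approach the paper intends: the paper gives no explicit proof of Theorem~\ref{them-generalized recollement}, instead remarking just before the statement that Corollaries~\ref{cor-degreewise flat hovey triple}, \ref{cor-exact degreewise flat hovey triple}, and~\ref{cor-dgflat hovey triple} generalize directly once Lemma~\ref{lemma-completeness of cotorsion pairs} supplies the needed completeness, and that Theorem~\ref{them-left recollement} then applies as in Theorem~\ref{them-recollement of Murfet}. Your write-up simply makes these steps explicit, including the verification $\class{W}_3\cap\class{C}_1=\class{E}\cap\dwclass{A}=\exclass{A}=\class{C}_2$ and $\class{C}_3=\dgclass{A}\subseteq\dwclass{A}=\class{C}_1$, which is precisely what is needed.
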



\begin{thebibliography}{9}


 \bibitem[1]{becker} Hanno Becker, \emph{Models for singularity
 categories}, Adv. Math. vol.~254, 2014, pp.~187--232.


\bibitem[2]{BBD-perverse sheaves}
Beilinson, Bernstein, and Deligne, \emph{Faisceaux pervers}, Ast\'erisque, vol.~100, 1982.


\bibitem[3]{bravo}
Daniel Bravo, \emph{The stable derived category of a ring via model categories},
  Ph.D. thesis, Wesleyan University, May 2011.

\bibitem[4]{bravo-gillespie-hovey}
Daniel Bravo, James Gillespie and Mark Hovey, \emph{The stable module category of a general ring}, arXiv:1405.5768.


\bibitem[5]{enochs-jenda-book}
E.~Enochs and O.~Jenda, \emph{Relative homological algebra}, De
Gruyter Expositions in Mathematics no. 30, Walter De Gruyter, New
York, 2000.

\bibitem[6]{enochs-model strucs}
Edgar Enochs, Sergio Estrada, and Alina Iacob, \emph{Cotorsion pairs, model structures and adjoints in homotopy categories}, Houston J. Math.  vol.~40, no.~1, 2014, pp.~43--61.

 \bibitem[7]{gillespie}
 James Gillespie, \emph{The flat model structure on Ch(R)}, Trans.
 Amer. Math. Soc. vol.~356, no.~8, 2004, pp.~3369-3390.

 \bibitem[8]{gillespie-quasi-coherent}
 James Gillespie, \emph{Kaplansky classes and derived categories},
 Math. Z. vol.~257, no.~4, 2007, pp.811-843.

 \bibitem[9]{gillespie-degreewise-model-strucs}
 James Gillespie, \emph{Cotorsion pairs and degreewise homological model structures},
 Homology, Homotopy Appl. vol.~10, no.~1, 2008, pp.~283-304.


\bibitem[10]{gillespie-exact model structures}
James Gillespie, \emph{Model structures on exact categories}, J. Pure Appl. Algebra vol.~215 (2011), pp.~2892--2902.

\bibitem[11]{gillespie-recollements}
James Gillespie, \emph{Gorenstein complexes and recollements from cotorsion pairs},  arXiv:1210.0196v2.

\bibitem[12]{gillespie-recollements2}
James Gillespie, Exact model structures and recollements, 2013, arXiv:1310.7530.

\bibitem[13]{gillespie-hovey triples}
James Gillespie, \emph{How to construct a Hovey triple from two cotorsion pairs}, preprint; arXiv:1406.2619, 2014.


\bibitem[14]{hovey-model-categories}
Mark Hovey, \emph{Model categories}, Mathematical Surveys and
Monographs vol.~63, American Mathematical Society, 1999.


 \bibitem[15]{hovey}
 Mark Hovey, \emph{Cotorsion pairs, model category structures,
 and representation theory}, Math. Z. vol.~241,
 553-592, 2002.

 \bibitem[16]{krause-stable derived cat of a Noetherian scheme}
 Henning Krause, \emph{The stable derived category of a {N}oetherian
                   scheme}, Compos. Math., vol.~141, no.~5, 2005, pp.~1128--1162.

 \bibitem[17]{murfet} Daniel Murfet, \emph{The mock homotopy category of projectives and Grothendieck duality}, PhD thesis, Australian National University, 2007. (online at www.therisingsea.org.)


\bibitem[18]{neeman-flat}
Amnon Neeman, \emph{The homotopy category of flat modules, and {G}rothendieck
  duality}, Invent. Math. vol.~174, no.~2, 2008, pp.~255--308.

\bibitem[19]{positselski}
Leonid Positselski, \emph{Two kinds of derived categories, Koszul duality, and comodule-contramodule correspondence}, Mem. Amer. Math. Soc. vol.~212, no.~996, 2011. arXiv:0905.2621.

\bibitem[20]{saorin-stovicek}
Manuel Saor{\'{\i}}n and Jan {\v{S}}{\v{t}}ov{\'{\i}}{\v{c}}ek, \emph{On exact
  categories and applications to triangulated adjoints and model structures},
  Adv. Math. vol.~228, 2011, no.~2, pp.~968--1007. 

\bibitem[21]{stovicek}
Jan {\v{S}}\v{t}ov{\'{\i}}{\v{c}}ek, \emph{Deconstructibility and the {H}ill lemma
  in {G}rothendieck categories}, Forum Math. vol.~25, 2013, no.~1,
  pp.~193--219. 

\bibitem[22]{stovicek-exact model cats}
Jan {\v{S}}{\v{t}}ov{\'{\i}}{\v{c}}ek, \emph{Exact model categories, approximation theory, and
cohomology of quasi-coherent sheaves}, Advances in Representation Theory of Algebras (ICRA Bielefeld, Germany, 8-17 August, 2012), EMS Series of Congress Reports, European Mathematical Society Publishing House, 2014, pp.~297--367.
 \end{thebibliography}
\end{document}